\def\figurename{Figure} % Replace the colon that normally appears after the Figure number by a period.
\renewcommand{\fnum@figure}[1]{\figurename~\thefigure.}
\def\tablename{Table} % Replace the colon that normally appears after the Figure number by a period.
\renewcommand{\fnum@table}[1]{\tablename~\thetable.}
\newtheorem{theorem}{Theorem}[section]
\newtheorem{proposition}[theorem]{Proposition}
\theoremstyle{definition}
\newtheorem{definition}[theorem]{Definition}
\theoremstyle{remark}
\newtheorem{remark}[theorem]{Remark}
\numberwithin{equation}{section}
\def\P{\mathbb P}
\def\R{\mathbb R}
\def\E{\mathbb E}
\def\E{\mathbb E}
\begin{document}
%\vskip 0.4in
\title{ {Stochastic viscosity solution for stochastic PDIEs with nonlinear Neumann boundary condition}}
\author{Auguste Aman\thanks{This author is supported by TWAS Research Grants to individuals (No. 09-100 RG/MATHS/AF/\mbox{AC-I}--UNESCO FR: 3240230311),\;\; augusteaman5@yahoo.fr, corresponding author}\\
{\it U.F.R Mathématiques et informatique, Universit\'{e} de Cocody},\\ {\it 582 Abidjan 22, C\^{o}te d'Ivoire}
\\ Yong Ren\thanks{This author is supported by the National Natural Science Foundation of China (No. 10901003) and the Great Research Project of Natural Science Foundation of
Anhui Provincial Universities (No. KJ2010ZD02),\ brightry@hotmail.com and renyong@126.com}\\
{\it Department of Mathematics, Anhui Normal University, Wuhu 241000, China}}

\date{}
\maketitle

\begin{abstract}
This paper is an attempt to extend the notion of viscosity solution
to nonlinear stochastic partial differential integral equations with
nonlinear Neumann boundary condition. Using the recently developed
theory on generalized backward doubly stochastic differential
equations driven by a L\'{e}vy process, we prove the existence of
the stochastic viscosity solution, and further extend the nonlinear
Feynman-Kac formula.
\end{abstract}

\noindent {\bf AMS Subject Classification:} 60H15; 60H20

\vspace{.08in} \noindent \textbf{Keywords}: Stochastic viscosity
solution, backward doubly stochastic differential equation, L\'{e}vy
process, stochastic partial differential integral equation with
Neumann boundary condition.

\section{Introduction}
The notion of the viscosity solution for partial differential
equations, first introduced by Crandall and Lions \cite{CL}, has an
impact on the modern theoretical and applied mathematics. Today the
theory has become an indispensable tool in many applied fields,
especially in optimal control theory and numerous subjects related
to it. We refer to the well-known "User's Guide" by Crandall et al.
\cite{Cral} and the books by Bardi et al. \cite{Barl} and Fleming
and Soner \cite{FS} for a detailed account for the theory of
(deterministic) viscosity solutions.

Since it is well known that almost all the deterministic problems in
these applied fields have their stochastic counterparts, many works
have extended the notion of viscosity solution to stochastic partial
differential equations (SPDEs, in short). The first among them is
done by Lions and Souganidis \cite{LS1,LS2}. They use the so-called
"stochastic characteristic" to remove the SPDEs. Next, two other
ways of defining a stochastic viscosity solution of SPDEs is
considered by Buckdahn and Ma respectively in \cite{BM1, BM2} and
\cite{BM3}. In the two first paper, they used the "Doss-Sussman"
transformation to connect the stochastic viscosity solution of SPDEs
with the solution of associated backward doubly stochastic
differential equations (BDSDEs, in short). In the second one, they
introduced the stochastic viscosity solution by using the notion of
stochastic sub and super jets. Recently, based on both previous work, Boufoussi et al. introduced
in \cite{Bal}, the notion of viscosity solution of SPDEs with
nonlinear Neumann boundary condition. The existence result is
derived via the so-called generalized BDSDEs and the "Doss-Sussman"
transformation.

Inspired by the aforementioned works, especially \cite{Bal} and
\cite{BM1,BM2}, this paper considers the following nonlinear
stochastic partial differential integral equations (SPDIEs, in
short) with nonlinear Neumann boundary condition
\begin{eqnarray}
\left\{
\begin{array}{ll}
\frac{\partial u}{\partial
t}(t,x)+Lu(t,x)+f(t,x,u(t,x),(u_k^1(t,x))_{k=1}^{\infty})+g(t,x,u(t,x))\dot{B}_{s}=0,
\;\; (t,x)\in[0,T]\times\overline{\Theta},\\\\
\frac{\partial u}{\partial n}(t,x)+\phi(t,x, u(t,x))=0, \;\;\; (t,x)\in[0,T]\times\partial\Theta,\\\\
u(T,x)=u_0(x),\;\;x\in\overline{\Theta},
\end{array}\right.
\label{i1}
\end{eqnarray}
where $\dot{B}$ denotes white noise with respect to Brownian motion $B$; which  Moreover $f,\, g,\, \phi$
and $u_0$ are some measurable functions with appropriate dimensions
and $L$ is the second-order differential integral operator of the
form:
\begin{eqnarray}
L\varphi(t,x)&=&m_1\sigma(x)\frac{\partial\varphi}{\partial x}(t,x)
+\frac{1}{2}\sigma(x)^2\frac{\partial^2\varphi}{\partial x^2}
(t,x)\nonumber\\&&+\int_{\R}\left[\varphi(t,x+\sigma(x)y)-\varphi(t,x)-\frac{\partial\varphi}{\partial
x}
(t,x)\sigma(x)y\right]\nu(dy);\nonumber\\
\label{i2}
\end{eqnarray}
in which $\sigma$ is a certain function and $m_1=\E(L_1)$, which
will be given in Section 3. We denote
\begin{eqnarray*}
\varphi^{1}_k(t,x)=\int_{\R}[\varphi(t,x+\sigma(x)y)-\varphi(t,x)]p_k(y)\nu(dy),\;
k\geq 1
\end{eqnarray*}
and
\begin{eqnarray*}
\frac{\partial\varphi}{\partial n}(t,x)=
\sum_{i=1}^{d}\frac{\partial\psi}{\partial_i}(x)\frac{\partial\varphi}{\partial x_i}(t,x),\; \forall\;
x\in\partial\Theta,
\end{eqnarray*}
where the function $\psi\in C^{2}_{b}(\R^n)$ is connected to the
domain $\Theta$ by the following relation:
\begin{eqnarray*}\Theta=\{x\in\R^{n}:\, \psi(x)>0\} \;\, \mbox{and}\;\;
\partial\Theta=\{x\in\R^{n}:\, \psi(x)=0\}.
\end{eqnarray*}

The goal of this paper is to determine the definition and next
naturally establish
 the existence of the stochastic viscosity solution to SPDIEs \eqref{i1}, which could be used for the purpose of option pricing in a L\'{e}vy market. More precisely,
 we give some direct links between this stochastic viscosity solution and the solution of
 the so-called generalized backward doubly stochastic differential equations  driven by
 a L\'{e}vy process (BDSDELs,
for short) initiated by Hu and Ren \cite{HY1}. Such a relation in a
sense could be viewed as an extension of the nonlinear Feynman-Kac
formula to stochastic PDIEs, which, to our best knowledge, is new.
Note also that this work could be considered as a generalization for
the updated result obtained by Ren and Otmani \cite{OR}, where the
authors treat deterministic PDIEs with nonlinear Neumann boundary
conditions.

The rest of this paper is organized as follows. In Section 2, we
introduced notion of
 stochastic viscosity solutions and all details associated. In Section 3, we review the
 generalized backward doubly stochastic differential equations driven by a L\'{e}vy process and
 its connection to stochastic PDIEs, from which the existence of the stochastic viscosity
  solution will follow.
\section{ Notion of viscosity solution for SPDIE}
\subsection{Notations,  assumptions and definitions}
\setcounter{theorem}{0} \setcounter{equation}{0} Let
$(\Omega,\mathcal{F}; \P)$ be a complete probability space on which
a $d$-dimensional Brownian motion $B=(B_t)_{t\geq 0}$ is defined .
Let ${\bf F}^{B}=\mathcal{F}_{t,T}^B$ denote the natural filtration
generated by $B$, augmented by the $\P$-null sets of $\mathcal{F}$.
Further, let ${\mathcal{M}}^{B}_{0,T}$ denote all the ${\bf
F}^{B}$-stopping times $\tau$ such $0\leq \tau\leq T$, a.s. and
${\mathcal{M}}^{B}_{\infty}$ be the set of all almost surely finite
${\bf F}^{B}$-stopping times. Let us introduce
$$\displaystyle{\ell^2=\Big\{x=(x^{(i)})_{i\geq 1};\;
\|x\|_{\ell^2}=(\sum_{i=1}^{\infty}|x^{(i)}|^2)^{1/2}<\infty\Big\}}.$$
For generic Euclidean spaces $E, E_{1}=\R^n$ or $\ell^2 $ and  we
introduce the following:
\begin{enumerate}
\item The symbol $\mathcal{C}^{k,n}([0,T]\times
E; E_{1})$ stands for the space of all $E_{1}$-valued functions
defined on $[0,T]\times E$ which are $k$-times continuously
differentiable in $t$ and $n$-times continuously differentiable in
$x$, and $\mathcal{C}^{k,n}_{b}([0,T]\times E; E_{1})$ denotes the
subspace of $\mathcal{C}^{k,n}([0,T]\times E; E_{1})$ in which all
functions have uniformly bounded partial derivatives.
\item For any sub-$\sigma$-field $\mathcal{G} \subseteq
\mathcal{F}_{T}^{B}$, $\mathcal{C}^{k,n}(\mathcal{G},[0,T]\times E;
E_{1})$ (resp.\, $\mathcal{C}^{k,n}_{b}(\mathcal{G},[0,T]\times E;
E_{1})$) denotes the space of all $\mathcal{C}^{k,n}([0,T]\times E;
E_{1})$  (resp.\, $\mathcal{C}^{k,n}_{b}([0,T]\times
E;E_{1}))$-valued random variable that are
$\mathcal{G}\otimes\mathcal{B}([0,T]\times E)$-measurable;
\item $\mathcal{C}^{k,n}({\bf F}^{B},[0,T]\times E; E_{1})$
(resp.$\mathcal{C}^{k,n}_{b}({\bf F}^{B},[0,T]\times E; E_{1})$) is
the space of all random fields $\phi\in
\mathcal{C}^{k,n}({\mathcal{F}}_{T},[0,T]\times E; E_{1}$ (resp.
$\mathcal{C}^{k,n}({\mathcal{F}}_{T},[0,T]\times E; E_{1})$, such
that for fixed $x\in E$ and $t\in [0,T]$, the mapping
$\displaystyle{\omega\mapsto \alpha(t,\omega,x)}$ is
${\bf F}^{B}$-progressively measurable.
\item For any sub-$\sigma$-field $\mathcal{G} \subseteq
\mathcal{F}^{B}$ and a real number $ p\geq 0$,
$L^{p}(\mathcal{G};E)$ to be all $E$-valued $\mathcal{G}$-measurable
random variable $\xi$ such that $ \E|\xi|^{p}<\infty$.
\end{enumerate}
Furthermore, regardless of their dimensions we denote by
$\left<\cdot,\cdot\right>$ and $|\cdot|$ the inner product and norm
in $E$ and $E_1$, respectively. For
$(t,x,y)\in[0,T]\times\R^{d}\times\R$, we denote
$D_{x}=(\frac{\partial}{\partial
x_{1}},....,\frac{\partial}{\partial x_{d}}),\,\\
D_{xx}=(\partial^{2}_{x_{i}x_{j}})_{i,j=1}^{d}$,
$D_{y}=\frac{\partial}{\partial y}, \,\
D_{t}=\frac{\partial}{\partial t}$. The meaning of $D_{xy}$ and
$D_{yy}$ is then self-explanatory.\newline Let $\Theta$ be an open
connected and smooth bounded domain of $\R^{n}\, (d\geq 1)$ such
that for a function $\psi\in\mathcal{C}^{2}_b(\R^{n}),\ \Theta$ and
its boundary  $\partial\Theta$ are characterized by
$\Theta=\{\psi>0\},\, \partial\Theta=\{\psi=0\}$ and, for any
$x\in\partial\Theta,\, \nabla\psi(x)$ is the unit normal vector
pointing towards the interior of $\Theta$.\newline Throughout this
paper, we shall make use of the following standing assumptions:
\begin{description}
\item $({\bf A1})$
\, The function $\sigma:\R^n\rightarrow\R^{n}$ is uniformly Lipschitz continuous,
with a Lipschitz constant $K>0$.
\item $({\bf A2})$ The function $f:\Omega\times [0,T]\times\overline{\Theta}\times\R\times\ell^2\rightarrow\R$
is a continuous random field such that for fixed $,(x,y,q),\,
f(\cdot,\cdot,x,y,\sigma^*q)$ is a
$\mathcal{F}^{B}_{t,T}$-measurable; and there exists a constant
$K>0$, for all $(t,x,y,z),\, (t',x',y',z')\in
[0,T]\times\R^n\times\R\times\ell^2,$ such that for $\P$-a.e.
$\omega$,\newline\newline $
\begin{array}{ll}
|f(\omega,0,0,0,0)|\leq K\\\\
|f(\omega,t,x,y,z)-f(\omega,t',x',y',z')|\leq
K(|t-t'|+|x-x'|+|y-y'|+|z-z'|).
\end{array}
$
\item $({\bf A3})$ The function $\phi:\Omega\times [0,T]\times\overline{\Theta}\times\R\rightarrow\R$
is a continuous random field such that, for fixed $(x,y),
\phi(\cdot,\cdot,x,y)$ is a $\mathcal{F}^{B}_{t,T}$-measurable; and
there exists a constant
 $K>0$,  for all $(t,x,y),\, (t',x',y')\in
[0,T]\times\R^n\times\R$, such that for $\P$-a.e.
$\omega$,\newline\newline $
\begin{array}{ll}
|\phi(\omega,0,0,0)|\leq K\\\\
|\phi(\omega,t,x,y)-\phi(\omega,t',x',y')|\leq
K(|t-t'|+|x-x'|+|y-y'|).
\end{array}
$
\item $({\bf A4})$ The function $u_0 :\R^n\rightarrow\R$ is continuous, for all  $x\in\R^n,$
 such that for some positive constants $K,\,p>0$,\\\\
$
\begin{array}{ll}
|u_0(x)|\leq K(1 + |x|^p).
\end{array}
$
\item $({\bf A5
})$ The function $g\in C^{0,2,3}_b ([0, T]\times\overline{\Theta}\times\R; \R^d)$.
\end{description}
As shown by the work of Buckdahn and Ma \cite{BM1,BM2}, our
definition of stochastic viscosity solution will depend heavily on
the following stochastic flow $\eta\in C({\bf F}^B, [0,
T]\times\R^n\times\R)$, defined as the unique solution of the
following stochastic differential equation in the Stratonovich
sense:
\begin{eqnarray}
\eta(t,x,y)&=&y+\int_t^T\langle g(s,x,\eta(s,x,y)),
\circ dB_s\rangle.\label{p1}
\end{eqnarray}
We refer the reader to \cite{BM1} for a lucid discussion on this
topic. Under the assumption $({\bf
A5})$, the mapping $y\mapsto \eta(t,x,y)$
 defines a diffeomorphism for all $(t,x),\; \P$-a.s. (see Protter \cite{Pr}). Let us denote its $y$-inverse
  by $\varepsilon(t,x,y)$. Then, one can show that $\varepsilon(t, x, y)$ is the solution to the following
  first-order SPDE:
\begin{eqnarray*}
\varepsilon(t,x,y)=y-\int_t^T\langle D_y\varepsilon(s,x,y),\, g(s,x,\eta(s,x,y))
\circ dB_s\rangle.
\end{eqnarray*}
We now define the notion of stochastic viscosity solution for SPDIEs
\eqref{i1}. In order to simply the notation, we denote:
\begin{eqnarray*}
A_{f,g}(\varphi(t,x))=L\varphi(t,x)+f(t,x,\varphi(t,x),(\varphi^1_k(t,x))_{k=1}^{\infty})
-\frac{1}{2}\langle g,D_yg\rangle(t,x,\varphi(t,x))
\end{eqnarray*}
and $\Psi(t,x)=\eta(t,x,\varphi(t,x))$
\begin{definition}
(1) A random field $u\in C({\bf F}^B, [0,
T]\times\overline{\Theta})$ is called a stochastic viscosity
subsolution of the SPDIEs \eqref{i1} if $u(T,x)\leq u_0(x)$, for all
$x\in \overline{\Theta}$
 and if for any stopping time $\tau\in{\mathcal{M}}^{B}_{0,T}$, any state variable $\xi\in L^{0}
 (\mathcal{F}^B_{\tau},[0,T]\times\Theta)$, and any random field $\varphi\in C^{1,2}(\mathcal{F}^B\tau,
 [0, T]\times\R^n)$ satisfying that
$$
\begin{array}{ll}
u\left(t,x\right)-\Psi\left(t,x\right)
 \leq 0=u\left(\tau(\omega),\xi(\omega)\right)-\Psi
\left(\tau(\omega),\xi(\omega)\right)
\end{array}$$
for all $(t, x)$ in a neighborhood of $(\xi,\tau)$, $\P$-a.e. on the set $\{0<\tau<T\}$, it holds that
\begin{description}
\item $(a)$ on the event $\{0<\tau<T\}$,
\begin{eqnarray*}
\mathrm{A}_{f,g}\left(\Psi\left(\tau,\xi\right)\right)- D_y\Psi
\left(\tau,\xi\right)D_t \varphi\left(\tau,\xi\right) \leq 0, \
\P\mbox{-a.e.};
\end{eqnarray*}

\item $(b)$ on the event $\{0<\tau<T\}\cap\{\xi\in\partial \Theta\}$,
\begin{align} \min\left\{\mathrm{A}_{f,
g}\left(\Psi\left(\tau,\xi\right)\right)- D_y \Psi
\left(\tau,\xi\right) D_t
\varphi\left(\tau,\xi\right),\,-\frac{\displaystyle{\partial
\Psi}}{\displaystyle{\partial
n}}\left(\tau,\xi\right)-\phi\left(\tau,\xi,\Psi\left(\tau,\xi\right)\right)
\right\} \leq 0, \ \P\mbox{-a.e.} \label{E:viscosity01}
\end{align}
\end{description}
(2) A random field $u\in C({\bf F}^B, [0,
T]\times\overline{\Theta})$ is called a stochastic viscosity
subsolution of the SPDIE $(f, g)$ \eqref{i1} if $u(T,x)\geq u_0(x)$,
for all $x\in \overline{\Theta}$ and if for any stopping time
$\tau\in{\mathcal{M}}^{B}_{0,T}$, any state variable $\xi\in L^{0}
(\mathcal{F}^B_{\tau},[0,T]\times\Theta)$, and any random field
$\varphi\in C^{1,2} (\mathcal{F}^B\tau, [0, T]\times\R^n)$
satisfying that$$
\begin{array}{ll}
u\left(t,x\right)-\Psi\left(t,x\right)
 \geq 0=u\left(\tau(\omega),\xi(\omega)\right)-\Psi
\left(\tau(\omega),\xi(\omega)\right)
\end{array}
$$
for all $(t, x)$ in a neighborhood of $(\xi,\tau)$, $\P$-a.e. on the set $\{0<\tau<T\}$, it holds that
\begin{description}
\item $(a)$ on the event $\{0<\tau<T\}$,
\begin{eqnarray*}
\mathrm{A}_{f,g}\left(\Psi\left(\tau,\xi\right)\right)- D_y\Psi
\left(\tau,\xi\right)D_t \varphi\left(\tau,\xi\right) \geq 0,\
\P\mbox{-a.e.};
\end{eqnarray*}
\item $(b)$ on the event $\{0<\tau<T\}\cap\{\xi\in\partial \Theta\}$,
\begin{align} \max\left\{\mathrm{A}_{f,
g}\left(\Psi\left(\tau,\xi\right)\right)- D_y \Psi
\left(\tau,\xi\right) D_t
\varphi\left(\tau,\xi\right),\,-\frac{\displaystyle{\partial
\Psi}}{\displaystyle{\partial
n}}\left(\tau,\xi\right)-\phi\left(\tau,\xi,\Psi\left(\tau,\xi\right)\right)
\right\} \geq 0, \P\mbox{-a.e.} \label{E:viscosity01}
\end{align}
\end{description}
(3) A random field $u \in \mathcal{C}\left(\mathbf{F}^B, [0,T]\times
\overline{\Theta}\right)$ is called a stochastic viscosity solution
of SPDIE $(f,g)$ \eqref{i1} if it is both a stochastic viscosity
subsolution and a a stochastic viscosity supersolution.
\end{definition}
\begin{remark}
We remark that if $f,\,\phi$ are deterministic and $g\equiv 0$, the flow $\eta$ becomes
$\eta(t,x,y)=y$ and $\Psi(t,x)=\varphi(t,x),\, \forall\ (t,x,y)\in[0,T]\times\R^n\times\R$. Thus,
definition $2.1$ coincides with the definition of
(deterministic) viscosity solution of PDIE $(f,0,\phi)$ given in \cite{OR}.
\end{remark}
Next, the following notion of a random viscosity solution will be a bridge linking the
stochastic viscosity solution and its
deterministic counterpart.
\begin{definition}
A random field $u\in C({\bf F}^B, [0, T]\times\overline{\Theta})$ is called an $\omega$-wise
 viscosity solution if for $\P$-almost all $\omega\in\Omega,\; u(\omega,\cdot,\cdot)$ is a
 deterministic viscosity solution of SPDIE $(f,0,\phi)$.
\end{definition}

\subsection{Doss-Sussmann transformation}
In this subsection, we study the Doss-Sussmann transformation. It
enables us to convert SPDIE $(f,g,\phi)$ to an SPDIE
$(\widetilde{f},0,\widetilde{\phi})$, where $\widetilde{f}$ and
$\widetilde{\phi}$ are well-defined random field depending on $f$,
$g$ and $\phi$ respectively. We get the following important result.
\begin{proposition}
Assume $({\bf A1})$--$({\bf A5})$ hold. A random field $u$ is a
stochastic viscosity sub- (resp. super)-solution to SPDIE $(f,
g,\phi)$ \eqref{i1} if and only if $v(\cdot,\cdot) =
\varepsilon(\cdot,\cdot, u(\cdot,\cdot))$ is a stochastic viscosity
sub-(resp. super)-solution to SPDIE $(\widetilde{f},
0,\widetilde{\phi})$, with
\begin{eqnarray}
&&\widetilde{f}(t,x,y,(z^{(k)})_{k=1}^{\infty})\nonumber\\
&=&\frac{1}{D_y\eta(t,x,y)}
\left[f\left(t,x,\eta(t,x,y),\left(D_y\eta(t,x,y)z^{(k)}+\sigma(x)D_x\eta(t,x,y){\bf 1}_{\{k=1\}}+\int_{\R}\theta^k(t,x,y,u)\nu(du)\right)^{\infty}_{k=1}\right)\right.\nonumber\\
&&\left.-\frac{1}{2}gD_yg(t,x,\eta(t,x,y))+L_x\eta(t,x,y)+\sigma(x)D_{xy}\eta(t,x,y)\left(z^{(1)}+\int_{\R}\theta^1(t,x,y,u)\nu(du)\right)\right.\nonumber\\
&&\left.+\frac{1}{2}D_{yy}\eta(t,x,y)\sum_{k=1}^{\infty}\left|z^{(k)}+\int_{\R}\theta^k(t,x,y,u)\nu(du)\right|^2\right]\label{Doss1}
\end{eqnarray}
and
\begin{eqnarray}
\widetilde{\phi}(t,x,y)=\frac{1}{D_y\eta(t,x,y)}\left[h(t,x,\eta(t,x,y))+D_x\eta(t,x,y)\nabla\psi(x)\right].\label{Doss2}
\end{eqnarray}
The process $\theta$ is defined by
\begin{eqnarray}
\theta^k(t,x,y,u)=[\eta(t,x+\sigma(x)u,y)-\eta(t,x,y)]p_k(u).\label{Def}
\end{eqnarray}
\end{proposition}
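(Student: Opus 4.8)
The plan is to prove the equivalence by transferring the two defining ingredients of a stochastic viscosity solution --- the test-function touching condition and the two viscosity inequalities (a) and (b) --- back and forth through the diffeomorphism $\eta$. I treat only the subsolution case throughout; the supersolution case is identical after reversing inequalities and replacing $\min$ by $\max$, and the ``if and only if'' is automatic because every transformation below is invertible. The backbone is the relation $u=\eta(\cdot,\cdot,v)$, equivalent to $v=\varepsilon(\cdot,\cdot,u)$ since $\eta(t,x,\cdot)$ and $\varepsilon(t,x,\cdot)$ are mutually inverse, together with the fact that under $({\bf A5})$ the map $y\mapsto\eta(t,x,y)$ is a strictly increasing $C^2$ diffeomorphism, so $D_y\eta(t,x,y)>0$ for all $(t,x,y)$, $\P$-a.s.

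First I would set up the test-function correspondence. Given a test field $\varphi\in C^{1,2}(\mathcal F^B_\tau,[0,T]\times\R^n)$ for $v$ at $(\tau,\xi)$ --- that is, since $g\equiv0$ makes the flow trivial and $\Psi=\varphi$ for the problem $(\widetilde f,0,\widetilde\phi)$, the field $v-\varphi$ has a local maximum equal to $0$ at $(\tau,\xi)$ --- I set $\Psi=\eta(\cdot,\cdot,\varphi)$. Because $\eta(t,x,\cdot)$ is strictly increasing, $u(t,x)-\Psi(t,x)=\eta(t,x,v(t,x))-\eta(t,x,\varphi(t,x))$ has the same sign as $v(t,x)-\varphi(t,x)$ and vanishes exactly where the latter does; hence $u-\Psi\le0=u(\tau,\xi)-\Psi(\tau,\xi)$ in a neighbourhood of $(\tau,\xi)$. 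This yields a bijection between admissible test functions for $v$ (w.r.t.\ $(\widetilde f,0,\widetilde\phi)$) and for $u$ (w.r.t.\ $(f,g,\phi)$), and it is here that $D_y\eta>0$ is essential.

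The analytic core is a pair of pointwise operator identities, to be verified at $(\tau,\xi)$ where $v=\varphi$ and $u=\Psi$. For the interior I would establish
\[
A_{f,g}\big(\Psi(\tau,\xi)\big)-D_y\eta\,D_t\varphi(\tau,\xi)=D_y\eta(\tau,\xi,\varphi)\Big[L\varphi(\tau,\xi)+\widetilde f\big(\tau,\xi,\varphi,(\varphi^1_k)_k\big)-D_t\varphi(\tau,\xi)\Big]
\]
by applying the chain rule to $\Psi=\eta(\cdot,\cdot,\varphi)$. The local parts are routine: $\partial_x\Psi=D_x\eta+D_y\eta\,\partial_x\varphi$ and $\partial_{xx}\Psi=D_{xx}\eta+2D_{xy}\eta\,\partial_x\varphi+D_{yy}\eta(\partial_x\varphi)^2+D_y\eta\,\partial_{xx}\varphi$ feed the drift and diffusion of $L$ and recombine into $D_y\eta\,L\varphi$ plus the $L_x\eta$, $D_{xy}\eta$ and $D_{yy}\eta$ corrections of \eqref{Doss1}, while the Stratonovich term $-\tfrac12\langle g,D_yg\rangle(\cdot,\Psi)$ transfers unchanged. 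For the boundary I would use $\tfrac{\partial\Psi}{\partial n}=D_x\eta\cdot\nabla\psi+D_y\eta\,\tfrac{\partial\varphi}{\partial n}$ together with \eqref{Doss2} to obtain $-\tfrac{\partial\Psi}{\partial n}-\phi(\cdot,\Psi)=D_y\eta\big[-\tfrac{\partial\varphi}{\partial n}-\widetilde\phi(\cdot,\varphi)\big]$. Since $D_y\eta>0$, dividing by $D_y\eta$ preserves the direction of every inequality and commutes with the $\min$ in \eqref{E:viscosity01}, so conditions (a) and (b) for $u$ become exactly (a) and (b) for $v$, and conversely.

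I expect the genuine difficulty to lie in the nonlocal (jump) term of $L$. One must expand $\Psi(t,x+\sigma(x)u)=\eta\big(t,x+\sigma(x)u,\varphi(t,x+\sigma(x)u)\big)$ and split the increment $\eta(t,x+\sigma u,\varphi(x+\sigma u))-\eta(t,x,\varphi(x))$ into a jump in the spatial argument --- which, weighted by $p_k$ and integrated against $\nu$, reproduces the process $\theta^k$ of \eqref{Def} and the nonlocal part of $L_x\eta$ --- and a jump in the $y$-argument driven by $\varphi$, which reproduces the $\varphi^1_k=z^{(k)}$ contributions. The careful bookkeeping needed to show that these combine into precisely the shifted argument $D_y\eta\,z^{(k)}+\sigma D_x\eta\,\mathbf 1_{\{k=1\}}+\int_\R\theta^k\nu(du)$ of $f$ and into the quadratic term $\tfrac12 D_{yy}\eta\sum_k\big|z^{(k)}+\int_\R\theta^k\nu(du)\big|^2$ in \eqref{Doss1}, via the L\'evy--It\^o structure and the orthogonality of the $\{p_k\}$, is the main obstacle; the classical diffusion identity of Buckdahn and Ma is recovered in the local limit $\varphi^1_1\to\sigma\partial_x\varphi$. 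Once this identity is secured, the equivalence follows by combining it with the test-function correspondence established above.
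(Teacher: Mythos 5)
Your proposal follows essentially the same route as the paper's own proof: transfer of test functions through the strictly increasing flow $\eta$ (so that $u-\Psi$ and $v-\varphi$ touch zero simultaneously at $(\tau,\xi)$), the chain-rule operator identity $D_y\varepsilon\,A_{f,g}(\Psi)=A_{\widetilde f,0}(\varphi)$, the boundary identity $\frac{\partial\Psi}{\partial n}+\phi(\cdot,\Psi)=D_y\eta\left(\frac{\partial\varphi}{\partial n}+\widetilde{\phi}(\cdot,\varphi)\right)$, and the positivity of $D_y\eta$ to preserve the inequalities and the $\min$/$\max$ in condition (b). The nonlocal bookkeeping you flag as the main remaining obstacle is exactly what the paper also leaves implicit (it simply asserts the chain-rule expansion of $L\Psi$ through the operator $L_x$, with the $\theta^k$ corrections already built into the definition of $\widetilde{f}$), and, like you, the paper proves only one implication in detail and declares the converse ``similar.''
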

\begin{remark}
Let us recall that under the assumption $({\bf A5})$ the random
field $\eta$ belongs to $C^{0,2,2}(F^{B}, [0,T]\times\R^n\times\R)$,
and hence that the same is true for $\varepsilon$. Then, considering
the transformation $\Psi(t,x) = \eta(t,x,\varphi(t,x))$, we obtain
\begin{eqnarray*}
D_x\Psi&=& D_x\eta +D_y\eta D_x\varphi,\\
D_{xx}\Psi&=& D_{xx}\eta +2(D_{xy}\eta)(D_x\varphi)^{*}+(D_{yy}\eta)(D_x\varphi)(D_x\varphi)^* +(D_y\eta)(D_{xx}\varphi).
\end{eqnarray*}
Moreover, since for all $(t, x, y)\in[0,T]\times\R^n\times\R$ the equality $\varepsilon(t, x,\eta(t, x, y))= y$ holds $\P$-almost
surely, we also have
\begin{eqnarray*}
D_{x}\varepsilon+D_y\varepsilon D_x\eta&=&0,\\
D_y\varepsilon D_y\eta&=&1,\\
D_{xx}\varepsilon+2(D_{xy}\varepsilon(D_x\eta)^*+(D_{yy}\varepsilon)(D_{x}\eta)(D_x\eta)^* +(D_y\varepsilon)(D_{xx}\eta)&=&0,\\
(D_{xy}\varepsilon)(D_y\eta)+(D_{yy}\varepsilon)(D_x\eta)(D_y\eta)+(D_y\varepsilon)(D_{xy}\eta)&=&0,\\
(D_{yy}\varepsilon)(D_{y}\eta)^2+(D_{y}\varepsilon)(D_{yy}\eta)&=&0,
\end{eqnarray*}
where all the derivatives of the random field $\varepsilon(\cdot,\cdot,\cdot)$ are evaluated at $(t, x,\eta(t, x, y))$, and all
those of $\eta(\cdot,\cdot,\cdot)$ are evaluated at $(t, x, y)$.
\end{remark}
\begin{proof}[Proof of Proposition 2.4.]
We shall only prove that if $u\in C({\bf F}^B; [0,T]\times\R^n)$ is
a stochastic viscosity subsolution to SPDIEs $(f,g,\phi)$, then
$v(\cdot,\cdot) = \varepsilon(\cdot,\cdot, u(\cdot,\cdot))\in C({\bf
F}^B, [0, T]\times\R^n)$ is a stochastic viscosity subsolution to
SPDIE$(\widetilde{f},0,\widetilde{\phi})$. The remaining part can be
proved without enough difficulties in the similar way.

To this end, let $u\in C({\bf F}^B; [0,T]\times\R^n)$ be a
stochastic viscosity subsolution to SPDIEs $(f,g,\phi)$ and let
$v(t,x)=\varepsilon(t,x, u(t,x))$. Let us take
$\tau\in\mathcal{M}^B_{0,T}, \xi\in
L^2(\mathcal{F}^{B}_{\tau},\R^n)$ arbitrarily, and let $\varphi\in
C^{1,2}(\mathcal{F}^{B}_{\tau},\R^n)$ be such that
\begin{eqnarray*}
v(\omega,t,x)-\varphi(\omega,t,x)\leq 0=v(\omega,\tau(\omega),\xi(\omega))-\varphi(\omega,\tau(\omega),\xi(\omega))
\end{eqnarray*}
for all $(t, x)$ in a neighborhood of $(\xi,\tau)$, $\P$-a.e. on the set $\{0<\tau<T\}$.

Setting $\Psi(t,x)=\eta(t,x,\varphi(t,x))$ and since mapping $y \mapsto \eta(t,x,\varphi(t,x,y))$
is strictly increasing,
we have
\begin{eqnarray*}
u(t,x)-\Psi(t,x)&=&\eta(t,x,v(t,x))-\eta(t,x,\varphi(t,x))\\
&\leq& 0=\eta(\tau,\xi,v(\tau,\xi))-\eta(\tau,\xi,\varphi(\tau,\xi))=u(\tau,\xi)-\Psi(\tau,\xi),
\end{eqnarray*}
for all $(t, x)$ in a neighborhood of $(\xi,\tau)$, $\P$-a.e. on the
set $\{0<\tau<T\}$. Therefore, since $u$ is a stochastic viscosity
subsolution to SPDIE$(f,g,\phi)$, it follows that $\P$-a.e. on
$\{0<\tau<T\}$,
\begin{eqnarray}
\mathrm{A}_{f,g}\left(\Psi\left(\tau,\xi\right)\right)- D_y\Psi
\left(\tau,\xi\right)D_t \varphi\left(\tau,\xi\right)
\geq 0.\label{v1}
\end{eqnarray}
On the other hand, we have
\begin{eqnarray*}
L\Psi(t,x)&=&L_x\eta(t,x,\varphi(t,x))+D_y\eta(t,x,\varphi(t,x))
L\varphi(t,x)\\&&+\sigma(x)D_{xy}\eta(t,x,\varphi(t,x))(D_x\varphi(t,x))\\
&&+\frac{1}{2}D_{yy}\eta(t,x,\varphi(t,x))(D_x\varphi(t,x))^2,
\end{eqnarray*}
where $L_x$ is the same as the operator $L$, with all the derivatives taken with
respect to the second variable $x$ from which together with \eqref{Doss1}, we obtain
\begin{eqnarray*}
D_y\varepsilon(t,x,\Psi(t,x))A_{f,g}(\Psi(t,x))=A_{\widetilde{f},0}(\varphi(t,x)).
\end{eqnarray*}
Finally, in virtue of \eqref{v1}, we get
\begin{eqnarray*}
A_{\widetilde{f},0}(\varphi(\tau,\xi))\geq D_t\varepsilon(\tau,\xi).
\end{eqnarray*}
That is, part (a) of Definition 2.1. is established. To derive part
(b), noting that for all $(t, x)\in[0,T]\times\partial\Theta$, we
have
\begin{eqnarray*}
\frac{\partial\Psi}{\partial n}(t,x)&=& D_x\Psi(t,x)\cdot\nabla \psi(x)\\
&=& D_x \eta(t, x, \varphi(t, x))\cdot\nabla \psi(x) + D_y\eta(t, x,\varphi(t, x))D_x\varphi(t, x)\cdot\nabla\psi(x)\\
&=& D_x \eta(t, x, \varphi(t, x))\cdot\nabla \psi(x) + D_y\eta(t,
x,\varphi(t, x))\frac{\partial\varphi}{\partial n}(t,x).
\end{eqnarray*}
This shows that
\begin{eqnarray*}
\frac{\partial\Psi}{\partial n}(\tau,\xi)+\phi(\tau,\xi,\Psi(\tau,\xi))=D_x \eta(\tau, \xi, \varphi(t, x))
\left(\frac{\partial\varphi}{\partial n}(\tau,\xi)+\widetilde{\phi}
(\tau,\xi,\varphi(\tau,\xi))\right)
\end{eqnarray*}
where $\widetilde{\phi}$ is defined by \eqref{Doss2}. Because $D_y\eta(t,x,y)$ is strictly positive, we have $\P$-a.s.
on $\{0<\tau<T\}\cap \{\xi\in\partial\Theta\}$
\begin{eqnarray*}
\min\left\{A_{\widetilde{f},0}(\varphi(\tau,\xi))-D_t\varepsilon(\tau,\xi),-\frac{\partial\varphi}{\partial
n}(\tau,\xi)-\widetilde{\phi}
(\tau,\xi,\varphi(\tau,\xi))\right\}\geq 0.
\end{eqnarray*}
That is, $v$ is a stochastic viscosity subsolution of
SPDIE$(\widetilde{f},0,\widetilde{\phi})$.
\end{proof}
\section{Generalized BDSDELs and SPDIEs with Neumann
boundary condition}

The main object of this section is to show how a semi-linear SPDIE
$(f,g,\phi)$ \eqref{i1} is related to the so-called generalized
BDSDELs (GBDSDELs, for short) initiated by Hu and Ren\cite{HY1}, in
the Markovian case. To begin with, let us introduce another complete
probability space $(\Omega', \mathcal{F}',\P')$ on which we define a
L\'{e}vy process $L$ characterized by the following famous
L\'{e}vy-Khintchine formula
\begin{eqnarray*}
\E({\rm{e}}^{iuL_t})={\rm{e}}^{-t\Phi(u)}\;\; \mbox{with}\;\;
\Phi(u)=-ibu+\frac{\sigma^2}{2}u^2-\int_{\R}\left({\rm{e}}^{iuy}-1-iuy
{\bf 1}_{\{|y|\leq 1\}}\right)\nu(dy).
\end{eqnarray*}
Thus $L$ is characterized by its L\'{e}vy triplet $(b,\sigma,\nu)$
where $b\in\R, \sigma^2\geq 0$ and  $\nu$ is a measure defined in
$\R\backslash \{0\}$ which satisfies that\newline $
\begin{array}{l}
{\rm(i)} \ \int_{\R}(1 \wedge y^2)\nu(dy) < +\infty,\\
{\rm(ii)} \ \exists\ \varepsilon>0\, \mbox{and}\, \lambda>0
\,\mbox{such that}\,
\int_{(-\varepsilon,\varepsilon)^{c}}{\rm{e}}^{\lambda|y|}\nu(dy)<+\infty.
\end{array}
$
\newline
This implies that the random variable $L_t$ have moment of all orders, i.e\, $m_1=\E(L_1)=b+\int_{|y|\geq 1}y\nu(dy)$
and
$
\begin{array}{cc}
m_i=\int_{-\infty}^{+\infty}y^i\nu(dy)<\infty,\;\; \forall\, i\geq 2.
\end{array}
$ For the background on L\'{e}vy processes, we refer the reader to
\cite{JB,KS}.

We define the following family of $\sigma$-fields:
\begin{eqnarray*}
\mathcal{F}_{t}^{L}=\sigma(L_{r}-L_s, s\leq r \leq t)\vee
\mathcal{N}',
\end{eqnarray*}
where $\mathcal{N}'$ denotes all the $\P'$-null sets in $\mathcal{F}'$. Denote ${\bf F}^{L}=(\mathcal{F}_{t}^{L})
_{0\leq t\leq T}$.

Next, we
consider the product space $(\bar{\Omega}, \bar{\mathcal{F}}, \bar{\P})$ where
\begin{eqnarray*}
\bar{\Omega}=\Omega\otimes\Omega';\;\;\;\;\; \bar{\mathcal{F}}=\mathcal{F}\otimes\mathcal{F}',\;\;;\;\;\;\;\;
\bar{\P}=\P\otimes\P',
\end{eqnarray*}
and define $\mathcal{F}_{t}=\mathcal{F}_{t,T}^{B}\otimes\mathcal{F}_{t}^{L}
$ for all $t\in [0,T]$. We remark that ${\bf
F}= \{\mathcal{F}_{t},\ t\in [0,T]\}$ is neither increasing nor
decreasing so that it does not a filtration. Further, we assume that
random variables $\xi(\omega),\;\omega\in \Omega$ and
$\zeta(\omega'),\; \omega'\in \Omega'$ are considered as random
variables on $\bar{\Omega} $ via the following identifications:
\begin{eqnarray*}
\xi(\omega,\omega')=\xi(\omega);\,\,\,\,\,\zeta(\omega,\omega')=\zeta(\omega').
\end{eqnarray*}

We denote by $(H^{(i)})_{i\geq 1}$ the Teugels Martingale associated with the L\'{e}vy
process $\{L_t : t\in[0, T]\}$. More precisely
\begin{eqnarray*}
H^{(i)}=c_{i,i}Y^{(i)}+c_{i,i-1}Y^{(i-1)}+\cdot\cdot\cdot+c_{i,1}Y^{(1)}
\end{eqnarray*}
where $Y^{(i)}_{t}=L_{t}^{i}-m_i$ for all $i\geq 1$ with $L^{i}_t$ a power-jump process. That is $L^{1}_t=L_t$
and $L^{i}_t=\sum_{0<s<t}(\Delta L_s)^i$ for all $i\geq 2$, where $X_{t^-} = \lim_{s\nearrow t} X_s$ and
$\Delta X_t = X_t - X_{t^-}$. It was shown in Nualart and Schoutens \cite{NS1} that the coefficients $c_{i,k}$
correspond to the orthonormalization of the polynomials $1, x, x^2,...$ with respect to the measure
$\mu(dx)=x^{2}d\nu(x)+\sigma^{2}\delta_{0}(dx)$:
\begin{eqnarray*}
q_{i-1}(x)=c_{i,i}x^{i-1}+c_{i,i-1}x^{i-2}+\cdot\cdot\cdot+c_{i,1}.
\end{eqnarray*}
We set
\begin{eqnarray*}
p_i(x)=xq_{i-1}(x)=c_{i,i}x^{i}+c_{i,i-1}x^{i-1}+\cdot\cdot\cdot+c_{i,1}x^{1}.
\end{eqnarray*}
The martingale $(H^{(i)})_{i=1}^{\infty}$ can be chosen to be pairwise strongly orthonormal
martingale.

We consider the following spaces of processes:
\begin{enumerate}
\item $\mathcal{M}^{2}(\ell^{2})$ denotes the space of $\ell^2$-valued, square integrable and $\mathcal{F}_{t}$
-measurable processes $\varphi=\{\varphi_{t}:\;
t\in[0,T]\}$ such that
\begin{description}
\item $\displaystyle \|\varphi\|^{2}_{{\mathcal{M}}^{2}}=\E\int ^{T}_{0}\|\varphi_{t}\|^{2}dt<\infty$.
\end{description}
\item  $\mathcal{S}^{2}(\R)$ is the subspace of  $\mathcal{M}^{2}(\R)$
formed by the $\mathcal{F}_{t}$-measurable, right continuous with left
limit (rcll) processes $\varphi=\{\varphi_{t}:\; t\in[0,T]\}$ such
that
\begin{description}
\item $\displaystyle{\|\varphi\|^{2}_{\mathcal{S}^{2}}=\E\left(\sup_{0\leq t\leq T}|\varphi_{t}|
^{2}\right)<\infty}$.
\end{description}
\end{enumerate}
Finally, let
$\mathcal{E}^{2}=\mathcal{S}^{2}(\R)\times{\mathcal{M}}^{2}(\ell^{2})$
be endowed with the norm
\begin{eqnarray*}
\|(Y,Z)\|^{2}_{\mathcal{E}^2}=\E\left(\sup_{0\leq t\leq T}|Y_{t}|
^{2}+\int ^{T}_{0}\|Z_{t}\|^{2}dt\right).
\end{eqnarray*}
\subsection{A class of reflected diffusion process and GBDSDELs}
We now introduce a class of reflected diffusion process. Let
$\Theta$ be a regular convex and bounded
 subsect of $\R^n$ , which is such that for a function $\psi\in C^{2}_b(\R^n),\; \Theta=\{x\in\R^n:\,
 \psi(x)>0\},\; \partial\Theta=\{x\in\R^n:\, \psi(x)=0\}$ and for all $x\in\partial\Theta,\; \nabla\psi(x)$
 coincides with the unit normal pointing towards the interior of $\Theta$ (see \cite{LS}). Under assumption
  $({\bf A1})$, we know from \cite{LS} that for every $(t,x)\in [0,T]\times\overline{\Theta}$ there exists a
   unique pair of progressively measurable process $(X^{t,x}_s, A^{t,x}_s)_{t\leq s\leq T}$, which is a
   solution to the following reflected SDE:
\begin{eqnarray}
\left\{
\begin{array}{l}
\displaystyle \P(X_s^{t,x}\in \overline{\Theta},\, s\geq t)=1\\\\
\displaystyle
X_s^{t,x}=x+\int_{t}^s\sigma(X_{r^-}^{t,x})dL_r+\int^s_t
\nabla\psi(X^{t,x}_s)dA^{t,x}_s,\; s\geq t,
\end{array}\right.
\label{RSDEJ1}
\end{eqnarray}
 where $A^{t,x}_s=\int^{s}_t {\bf 1}_{\{X_{r}^{t,x}\in\partial\Theta\}}dA^{t,x}_r,\;\; A^{t,x}$ is
 an increasing process with bounded variation on $[0,T],\, 0<T<\infty,\; A_0=0$. Furthermore,
 we have the following proposition.
\begin{proposition}\label{P:continuity00}
There exists a constant $ C>0 $ such that for all $x,\,x'\in
\overline{\Theta}$,
\begin{eqnarray*}
\mathbb{E}\left[\sup_{0\leq s\leq
T}|X^{x}_{s}-X^{x'}_{s}|^4\right] \leq
C|x-x'|^4
\end{eqnarray*}
and
\begin{eqnarray*}
\mathbb{E}\left[\sup_{0\leq s\leq
T}|A_s^{x}-A_{s}^{x'}|^4\right]\leq C|x-x'|^4.
\end{eqnarray*}
\end{proposition}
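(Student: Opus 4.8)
The plan is to establish the two inequalities in sequence, first the bound on the state process $X$ and then, bootstrapping it, the bound on the regulator $A$. Throughout I write $\Delta X_s=X^x_s-X^{x'}_s$ and $\Delta A_s=A^x_s-A^{x'}_s$, and I abbreviate $\delta\sigma_r=\sigma(X^x_{r^-})-\sigma(X^{x'}_{r^-})$; the existence of the solution to \eqref{RSDEJ1}, the inclusion $X^{\cdot}_s\in\overline\Theta$, and the fact that $A^{\cdot}$ is continuous, nondecreasing and supported on $\{X^{\cdot}_r\in\partial\Theta\}$ are all taken from \cite{LS}.

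For the first bound I would apply the It\^o formula for jump semimartingales to $s\mapsto|\Delta X_s|^4$, using \eqref{RSDEJ1}. The increment $d(\Delta X_r)$ splits into the driving part $\delta\sigma_r\,dL_r$ and the reflection part $\nabla\psi(X^x_r)\,dA^x_r-\nabla\psi(X^{x'}_r)\,dA^{x'}_r$, and the decisive observation is that the latter contributes a nonpositive amount. Since $\Theta$ is convex and $\nabla\psi$ is the inward unit normal, the supporting hyperplane inequality $\langle q-p,\nabla\psi(p)\rangle\ge 0$ holds for every $p\in\partial\Theta$ and $q\in\overline\Theta$; evaluating it on $\mathrm{supp}(dA^x)$ with $q=X^{x'}_r$ and on $\mathrm{supp}(dA^{x'})$ with $q=X^x_r$ yields
\[
\langle\Delta X_r,\nabla\psi(X^x_r)\rangle\,dA^x_r\le 0,\qquad -\langle\Delta X_r,\nabla\psi(X^{x'}_r)\rangle\,dA^{x'}_r\le 0 .
\]
Hence the reflection contribution to $d|\Delta X_r|^4$, which carries the nonnegative prefactor $4|\Delta X_{r^-}|^2$, is $\le 0$ and may be discarded in the upper bound. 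The remaining $L$-driven terms are routine: the drift generated by $m_1\,dr$ together with the second-order It\^o and jump corrections are dominated, via $({\bf A1})$ and the finiteness of the L\'evy moments $m_2,m_4=\int_\R y^i\,\nu(dy)$ (guaranteed by the exponential integrability hypothesis on $\nu$), by $C\int_t^s|\Delta X_r|^4\,dr$, while the continuous and jump martingale parts are handled by the Burkholder--Davis--Gundy inequality, using Young's inequality to absorb the supremum on the left-hand side. This produces $\E\sup_{t\le r\le s}|\Delta X_r|^4\le C|x-x'|^4+C\int_t^s\E\sup_{t\le u\le r}|\Delta X_u|^4\,dr$, and Gronwall's lemma gives the first estimate.

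For the regulator I would recover $A$ from the equation and then difference. Because $|\nabla\psi|=1$ on $\partial\Theta$, taking the inner product of \eqref{RSDEJ1} with $\nabla\psi(X^x_r)$ gives the exact identity $dA^x_r=\langle\nabla\psi(X^x_r),\,dX^x_r-\sigma(X^x_{r^-})\,dL_r\rangle$. Applying the It\^o formula to $\psi(X^x_s)$ rewrites $\int_t^s\langle\nabla\psi(X^x_r),dX^x_r\rangle$ as $\psi(X^x_s)-\psi(x)$ minus a correction term $C^x_s$ collecting the second-order and jump contributions, so that $A^x_s=\psi(X^x_s)-\psi(x)-C^x_s-\int_t^s\langle\nabla\psi(X^x_{r^-}),\sigma(X^x_{r^-})\,dL_r\rangle$, with the analogous expression for $A^{x'}$. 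Subtracting, $\Delta A_s$ is the sum of the endpoint term $[\psi(X^x_s)-\psi(X^{x'}_s)]-[\psi(x)-\psi(x')]$, the difference $-(C^x_s-C^{x'}_s)$, and the stochastic integral $-\int_t^s\langle\nabla\psi(X^x_{r^-})\sigma(X^x_{r^-})-\nabla\psi(X^{x'}_{r^-})\sigma(X^{x'}_{r^-}),\,dL_r\rangle$. The endpoint term is bounded by $C\sup_s|\Delta X_s|+C|x-x'|$, and the stochastic integral, whose integrand is Lipschitz in $\Delta X$ (product of the Lipschitz maps $\nabla\psi$ and $\sigma$, using boundedness of $D^2\psi$), is controlled by Burkholder--Davis--Gundy; both therefore yield $C|x-x'|^4$ once the $X$-estimate is inserted.

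The control of the correction difference $C^x_s-C^{x'}_s$ is the step I expect to be the main obstacle. Its continuous part is $\tfrac12\int_t^s[\mathrm{tr}(D^2\psi\,\sigma\sigma^*)(X^x_r)-\mathrm{tr}(D^2\psi\,\sigma\sigma^*)(X^{x'}_r)]\,\sigma^2\,dr$, with jump analogues integrated against $\nu$; since no explicit $\Delta X$ factor multiplies $D^2\psi$, bounding these by $C\int_t^s|\Delta X_r|\,dr$ requires $D^2\psi$ to be Lipschitz (a touch more than the stated $\psi\in\mathcal C^2_b$, e.g. $\psi\in\mathcal C^3_b$) rather than merely continuous, and it is precisely here that the regularity of $\psi$ and the finiteness of the higher L\'evy moments are genuinely used. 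Granting this domination, collecting the three pieces and invoking the fourth-moment bound on $\Delta X$ already obtained gives $\E\sup_s|\Delta A_s|^4\le C|x-x'|^4$, which completes the argument.
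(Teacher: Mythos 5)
First, a point of comparison: the paper itself offers no proof of this proposition. It is stated immediately after the citation of \cite{LS}, and—as in the antecedent papers \cite{Bal,HY1,OR}—the two estimates are simply taken as known. So your attempt has to stand on its own. Its skeleton is the right (and standard) one, essentially the Pardoux--Zhang argument from the Brownian case as used in \cite{Bal}: It\^o's formula for $|\Delta X_s|^4$ with the supporting-hyperplane inequality killing the reflection terms, then BDG and Gronwall; and for the regulator, the representation of $A$ obtained by pairing \eqref{RSDEJ1} with $\nabla\psi$ and applying It\^o to $\psi(X)$.

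The genuine gap is your premise that $A$ is continuous, which is false in this jump setting, and the argument never recovers from it. Whenever a jump of $L$ would carry $X$ out of $\overline{\Theta}$, the Skorokhod dynamics project it back instantaneously, so $A$ jumps; continuity of the regulator is a feature of continuous drivers only (in \cite{LS} the regulator is merely c\`adl\`ag of bounded variation). This has two consequences. For the $X$-estimate, the jump corrections in It\^o's formula for $|\Delta X_s|^4$ contain the reflection jumps $\nabla\psi(X_s)\Delta A_s$, and these are not disposed of by your first-order convexity inequality (which, in addition, pairs the It\^o integrand $\Delta X_{r^-}$ with an inequality valid at the current point $X_r$—harmless for the continuous part of $dA$, but not at its atoms). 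The standard repair is the $1$-Lipschitz property of the metric projection $\pi$ onto the convex set $\overline{\Theta}$: at a jump time $X^x_s=\pi\bigl(X^x_{s^-}+\sigma(X^x_{s^-})\Delta L_s\bigr)$, hence $|\Delta X_s|\le|\Delta X_{s^-}+\delta\sigma_s\Delta L_s|$, which dominates the jump of $|\Delta X|^4$ as if no reflection had occurred; without this step the Gronwall loop does not close. For the $A$-estimate the problem is worse and not merely cosmetic: your correction term $C^x_s$ now contains $\sum_{r\le s}\bigl[\psi(X^x_r)-\psi(X^x_{r^-})-\langle\nabla\psi(X^x_{r^-}),\Delta X^x_r\rangle\bigr]$, and $\Delta X^x_r$ includes $\nabla\psi(X^x_r)\Delta A^x_r$, so the very quantity you are estimating reappears inside $C^x_s-C^{x'}_s$. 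Bounding the difference of these sums crudely by $\sup_u|\Delta X_u|\cdot\sum_r|\Delta X^x_r|$ fails when $\nu$ has infinite activity (the latter sum diverges), so one must compensate the sums and exploit that each summand is quadratic in the jump size; even then, jumps at which one path reflects and the other does not only admit a bound of order $|u|\,|x-x'|$ rather than $|u|^2|x-x'|$, which is not $\nu$-integrable near the origin under the paper's assumptions, and interpolation degrades the Lipschitz rate to a H\"older one. This is where your argument breaks, independently of the regularity issue you flagged (that flag is correct: with only $\psi\in C^2_b$ the continuous part of $C^x-C^{x'}$ is controlled by a modulus of continuity, not by $C|\Delta X|$, so extra smoothness of $\partial\Theta$, or a different argument, is indeed needed to reach the rate $|x-x'|^4$).
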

The main subject in this section is the following GBDSDELs, for $(t,
x)\in[0; T]\times\R^n$,
\begin{eqnarray}
\left\{
\begin{array}{l}
{\rm(i)}\; \displaystyle \E\left[\sup_{t\leq s\leq T}|Y^{t,x}_s|^2+\int_{t}^{T}\|Z^{t,x}_s\|^2ds\right]<\infty ;\\
{\rm(ii)}\;\displaystyle
Y_{s}^{t,x}=u_0(X_T^{t,x})+\int_{s}^{T}f(r,X_{r}^{t,x},Y_{r}^{t,x},Z_{r}^{t,x})dr+\int_{s}^{T}
\phi(r,X_{r}^{t,x},Y_{r}^{t,x})dA^{t,x}_r\\ \displaystyle
\;\;\;\;\;\;\;\;\;\;\;\;\;\;\;\;+\int_{s}^{T}g(r,X_{r}^{t,x},Y_{r}^{t,x})
\,dB_{r}-\sum_{i=1}^{\infty}\int_{s}^{T}(Z^{t,x}_{r})^{(i)}dH^{(i)}_{r},\,\
t\leq s\leq T.
\end{array}\right.
\label{GBDSDEmarkovian}
\end{eqnarray}
\begin{remark}
In what follows, we will assume $n=1$. The multidimensional case can
be completed without major difficulties.
\end{remark}
Let us recall an existence and uniqueness result appear in
\cite{HY1} and a generalized version of the Itô-Ventzell formula
whose proof is analogous to the corresponding one in Buckdahn-Ma
\cite{BM1} replacing the Brownian motion $W$ by the Teugels
martingale $(H^{(i)})_{i\geq 1}$.

\begin{theorem}
Assume that $({\bf A1})$--$({\bf A5})$ hold. For each $(t, x)\in[0,
T]\times\R$, GBDSDEL \eqref{GBDSDEmarkovian} has a unique solution
$(Y^{t,x}, Z^{t,x})\in\mathcal{E}^2$.
\end{theorem}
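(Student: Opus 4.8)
The plan is to realize $(Y^{t,x},Z^{t,x})$ as the unique fixed point of a contraction on the Banach space $\mathcal{E}^2$, adapting the classical Picard scheme for backward doubly stochastic equations (Pardoux--Peng) to the Teugels martingale framework of \cite{HY1}. Throughout I keep $(t,x)$ fixed and, invoking Remark 3.2, work with $n=1$; I abbreviate $X_r=X^{t,x}_r$ and $A_r=A^{t,x}_r$ for the reflected diffusion of \eqref{RSDEJ1}, whose fourth moments are controlled by Proposition \ref{P:continuity00}. Because $L$ (on $\Omega'$) is independent of $B$ (on $\Omega$), I may enlarge the forward noise to the increasing filtration $\mathcal{G}_s=\mathcal{F}^B_{0,T}\otimes\mathcal{F}^L_s$, under which each $H^{(i)}$ remains a square-integrable martingale; this is what renders the forward stochastic integrals meaningful despite $\mathbf F$ being neither increasing nor decreasing.

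\textbf{Step 1: the decoupled equation.} First I solve the problem when the coefficients are frozen along a given $(U,V)\in\mathcal{E}^2$. Put
$$\Xi_s=u_0(X_T)+\int_s^T f(r,X_r,U_r,V_r)\,dr+\int_s^T\phi(r,X_r,U_r)\,dA_r+\int_s^T g(r,X_r,U_r)\,dB_r,$$
where the last term is a backward It\^o integral. Using $({\bf A1})$--$({\bf A5})$, the growth bound $({\bf A4})$ and the moment estimates of Proposition \ref{P:continuity00}, one checks $\Xi_t\in L^2(\bar{\mathcal F};\R)$. Setting $Y_s=\E[\Xi_s\mid\mathcal F_s]$ (the forward $dH$-integral having vanishing conditional mean) and applying the martingale representation theorem for the pairwise orthonormal Teugels martingales $(H^{(i)})_{i\ge1}$ of Nualart--Schoutens \cite{NS1} to the $\mathcal G$-martingale generated by $\Xi_t$, I extract a unique $Z\in\mathcal{M}^2(\ell^2)$ with
$$Y_s=\Xi_s-\sum_{i=1}^\infty\int_s^T (Z_r)^{(i)}\,dH^{(i)}_r,\qquad t\le s\le T;$$
the backward $dB$-integral is handled exactly as in Pardoux--Peng by the conditioning argument, and one verifies $Y\in\mathcal{S}^2(\R)$. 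This defines a map $\Gamma:(U,V)\mapsto(Y,Z)$ on $\mathcal{E}^2$.

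\textbf{Step 2: a priori estimate and contraction.} Let $(Y,Z)=\Gamma(U,V)$ and $(Y',Z')=\Gamma(U',V')$, and write $\delta Y=Y-Y'$, etc. Since the terminal datum $u_0(X_T)$ cancels, applying the doubly stochastic It\^o formula to $e^{\beta s}|\delta Y_s|^2$ and taking expectations gives, after the $dB$- and $dH$-martingales vanish,
$$\E[e^{\beta s}|\delta Y_s|^2]+\E\int_s^T e^{\beta r}\|\delta Z_r\|^2dr+\beta\E\int_s^T e^{\beta r}|\delta Y_r|^2dr=\E\int_s^T e^{\beta r}\big(2\delta Y_r\,\delta f_r+|\delta g_r|^2\big)dr+2\E\int_s^T e^{\beta r}\delta Y_r\,\delta\phi_r\,dA_r,$$
where $\delta f_r=f(r,X_r,U_r,V_r)-f(r,X_r,U'_r,V'_r)$ and similarly for $\delta\phi,\delta g$. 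The Lipschitz bounds $({\bf A2})$, $({\bf A3})$, $({\bf A5})$ turn the right-hand side into a multiple of $|\delta U|^2+|\delta V|^2$ plus terms in $|\delta Y|^2$ absorbed by the $\beta$-term via Young's inequality. Choosing $\beta$ large then yields $\|\Gamma(U,V)-\Gamma(U',V')\|_{\mathcal{E}^2}\le\kappa\,\|(U,V)-(U',V')\|_{\mathcal{E}^2}$ with $\kappa<1$, so the Banach fixed point theorem produces a unique $(Y,Z)\in\mathcal{E}^2$ solving \eqref{GBDSDEmarkovian}; uniqueness of the whole problem follows from the same estimate applied to two solutions.

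\textbf{Main obstacle.} I expect the delicate point to be the reflection term $2\,\delta Y_r\,\delta\phi_r\,dA_r$: since $({\bf A3})$ only provides Lipschitz continuity of $\phi$ (not monotonicity in $y$), the contribution $\int_s^T|\delta Y_r|^2\,dA_r$ cannot be absorbed by the constant weight $\beta$ alone. The remedy is to run the estimate with a combined weight $e^{\beta s+\gamma A_s}$ and to exploit that $A$ has finite exponential (or high polynomial) moments on the bounded domain $\overline{\Theta}$, as furnished by \eqref{RSDEJ1} and Proposition \ref{P:continuity00}; the extra term $\gamma\int e^{\beta r+\gamma A_r}|\delta Y_r|^2\,dA_r$ then absorbs the reflection contribution while the finite moments of $A$ keep all constants finite. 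The remaining care is bookkeeping for the backward It\^o correction from $g\,dB$ and the orthonormality $d[H^{(i)},H^{(j)}]_r=\delta_{ij}\,dr$, both routine once the weight is fixed.
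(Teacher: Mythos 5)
The paper itself contains no proof of this theorem: it is stated purely as a recalled result, imported from Hu and Ren \cite{HY1} (see the sentence immediately preceding it in Section 3). So the only meaningful comparison is with the argument in that reference, which is indeed the Picard/contraction scheme you outline. Your Step 1 — freeze the coefficients, condition, and apply the Nualart--Schoutens predictable representation for the Teugels martingales under the enlarged filtration $\mathcal{F}^B_{0,T}\otimes\mathcal{F}^L_s$ — matches the standard construction, so your overall architecture is the right one.

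However, Step 2 has a genuine gap, and it sits exactly at the point you flag as the ``main obstacle''; your proposed remedy absorbs the wrong term. After Young's inequality the reflection contribution reads
\begin{eqnarray*}
2\,\E\int_s^T e^{\beta r+\gamma A_r}\,\delta Y_r\,\delta\phi_r\,dA_r
&\leq& \gamma\,\E\int_s^T e^{\beta r+\gamma A_r}|\delta Y_r|^2\,dA_r
+\frac{K^2}{\gamma}\,\E\int_s^T e^{\beta r+\gamma A_r}|\delta U_r|^2\,dA_r.
\end{eqnarray*}
The extra term $\gamma\int e^{\beta r+\gamma A_r}|\delta Y_r|^2dA_r$ produced by the weight does absorb the first summand, but the second summand involves the \emph{input} pair, and this is the real problem: it cannot be dominated by $\|(\delta U,\delta V)\|_{\mathcal{E}^2}$ no matter how many moments $A_T$ possesses. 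Indeed $\E\int_s^T|\delta U_r|^2dA_r\leq\E\bigl[\sup_r|\delta U_r|^2\,A_T\bigr]$, and Cauchy--Schwarz then demands a fourth moment of $\sup_r|\delta U_r|$, which membership in $\mathcal{E}^2$ does not supply; exponential moments of $A$ do not repair this. The correct fix --- the one used in \cite{HY1} and \cite{Bal}, following the Pardoux--Zhang treatment of generalized BSDEs --- is to run the fixed point in the weighted norm
\begin{eqnarray*}
\E\left[\int_t^T e^{\beta r+\gamma A_r}|Y_r|^2\,(dr+dA_r)+\int_t^T e^{\beta r+\gamma A_r}\|Z_r\|^2\,dr\right],
\end{eqnarray*}
i.e.\ the $dA$-integral of $|Y|^2$ must itself be part of the norm; then the troublesome term carries the small factor $K^2/\gamma$ relative to the input norm, and the contraction closes once $\beta$ and $\gamma$ are large. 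Note that the same defect already infects your Step 1: for an arbitrary $(U,V)\in\mathcal{E}^2$ the variable $\Xi_s$ need not be square integrable, precisely because $\E\bigl[(\int_s^T\phi(r,X_r,U_r)\,dA_r)^2\bigr]$ requires the same mixed moments; so the space on which $\Gamma$ is defined must be the weighted one from the start, and the $\mathcal{S}^2$-estimate for the fixed point is recovered a posteriori via Burkholder--Davis--Gundy.
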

\begin{theorem}\label{theorem3.4}
Suppose that $M\in C^{0,2}({\bf F},[0, T]\times\R)$ is a semimartingale
in the sense that for every spatial parameter $x\in\R$ the process $t\mapsto M (t, x),\, t\in[0, T]$, is of the form:
\begin{eqnarray*}
M (t, x) = M (0, x)+\int^t_0
G(s, x)ds+\int^t_0\langle N (s, x),{dB}_s\rangle+\sum_{i=1}^{\infty}\int^t_0 K^{(i)} (s, x)dH^{(i)}_s,
\end{eqnarray*}
where $G\in C^{0,2}({\bf F}^B, [0, T]\times\R),\; N\in C^{0,2}({\bf F}^B, [0, T]\times\R;\R^d)$, and
the process $K$ belongs to $C^{0,2}({\bf F}^L, [0, T]\times\R;\ell^2)$. We also consider
the process $\alpha\in C ({\bf F}, [0, T])$
of the form
\begin{eqnarray*}
\alpha_t = \alpha_0+\int^t_0 \beta_sds+\int^t_0 \theta_sdA_s+\int^t_0\gamma_s{dB}_s
+\sum_{i=1}^{\infty}\int^t_0 \delta_s^{(i)}dH^{(i)}_s
\end{eqnarray*}
where $\beta, \theta\in \mathcal{S}^2(\R),\; \gamma\in \mathcal{M}^2(\R^{d})$, and
$\delta\in \mathcal{M}^2(\ell^2)$. Then the following equality holds $\P$-almost surely
for all $0\leq t\leq T$:
\begin{eqnarray*}
M (t, \alpha_t)& =& M (0, \alpha_0) + \int^t_0 G(s, \alpha_s)ds +
\int^t_0 \langle N(s, \alpha_s), dB_s\rangle
+ \sum_{i=1}^{\infty}\int^t_0 K^{(i)} (s, \alpha_s)dH^{(i)}_s\\
&&+ \int^t_0D_x M(s,\alpha_s)\beta_sds+\int^t_0D_x
M(s,\alpha_s)\theta_s dA_s
+ \int^t_0 \langle D_x M(s,\alpha_s),\gamma_s dB_s\rangle \\
&&+\sum_{i=1}^{\infty}\int^t_0 D_x
M(s,\alpha_s)\delta^{(i)}_sdH^{(i)}_s
-\frac{1}{2}\sum_{i=1}^{d}\int^t_0 D_{xx}
M(s,\alpha_s)|\gamma^{i}_s|^2ds
\\&&+\frac{1}{2}\sum_{i=1}^{\infty}\int^t_0 D_{xx} M(s,\alpha_s)|\delta_s^{(i)}|^2ds
+ \sum_{i=1}^{\infty}\int^t_0 D_x K^{(i)}(s,\alpha_s)\delta^{(i)}_sds
\\
&&-\sum_{i=1}^{d}\int^t_0 D_x N^{i}(s,\alpha_s)\gamma^{i}_sds.
\end{eqnarray*}
\end{theorem}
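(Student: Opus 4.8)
The plan is to follow the discretization argument of Buckdahn and Ma \cite{BM1}, the only essential modification being that the forward Brownian martingale $W$ there is replaced by the Teugels martingale family $(H^{(i)})_{i\geq 1}$, whose pairwise strong orthonormality yields $\langle H^{(i)},H^{(j)}\rangle_s = \delta_{ij}\,s$. By a standard localization through a sequence of stopping times, together with the $C^{0,2}$-regularity of $M,G,N,K$ and the $\mathcal{E}^2$-type integrability of the coefficients of $\alpha$, one reduces to the case in which all the random fields and their first two spatial derivatives are bounded, and it then suffices to establish the identity on a fixed interval $[0,t]$. I fix a partition $0=t_0<\cdots<t_n=t$ of mesh $|\pi|\to 0$ and telescope
\[ M(t,\alpha_t)-M(0,\alpha_0)=\sum_{k=0}^{n-1}\Big(M(t_{k+1},\alpha_{t_{k+1}})-M(t_k,\alpha_{t_k})\Big), \]
writing each summand as the sum of a \emph{time increment} $M(t_{k+1},\alpha_{t_k})-M(t_k,\alpha_{t_k})$ at a frozen spatial argument and a \emph{spatial increment} $M(t_{k+1},\alpha_{t_{k+1}})-M(t_{k+1},\alpha_{t_k})$ at the frozen time $t_{k+1}$. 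I stress the doubly stochastic nature of the problem: since $\mathcal{F}_t=\mathcal{F}^B_{t,T}\otimes\mathcal{F}^L_t$, the integrals against $B$ are \emph{backward} It\^o integrals, so their It\^o corrections carry the opposite sign to those of the forward integrals against $H^{(i)}$, which is exactly the sign pattern of the claimed formula.

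For the time-increment term I substitute the semimartingale decomposition of $M$ and evaluate the coefficients along $\alpha$; as $|\pi|\to 0$ the Riemann--Stieltjes and (backward/forward) It\^o sums converge to
\[ \int_0^t G(s,\alpha_s)\,ds+\int_0^t\langle N(s,\alpha_s),dB_s\rangle+\sum_{i=1}^{\infty}\int_0^t K^{(i)}(s,\alpha_s)\,dH^{(i)}_s, \]
which supplies the first line of the asserted identity. For the spatial-increment term I Taylor-expand to second order in $x$,
\[ M(t_{k+1},\alpha_{t_{k+1}})-M(t_{k+1},\alpha_{t_k})=D_xM(t_{k+1},\alpha_{t_k})\,\Delta\alpha_k+\tfrac{1}{2}D_{xx}M(t_{k+1},\bar\alpha_k)\,(\Delta\alpha_k)^2, \]
with $\bar\alpha_k$ an intermediate point. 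Inserting $\Delta\alpha_k\approx\beta\,\Delta s+\theta\,\Delta A+\gamma\,\Delta B+\sum_i\delta^{(i)}\Delta H^{(i)}$ into the first-order part yields the four integrals $\int D_xM\,\beta\,ds$, $\int D_xM\,\theta\,dA$, $\int\langle D_xM,\gamma\,dB\rangle$ and $\sum_i\int D_xM\,\delta^{(i)}\,dH^{(i)}$. The second-order part contributes only through the quadratic variation of $\alpha$: the finite-variation pieces drop out, while $\langle H^{(i)},H^{(j)}\rangle_s=\delta_{ij}s$ and the backward quadratic variation of $B$ produce, respectively, $+\tfrac{1}{2}\sum_i\int D_{xx}M\,|\delta^{(i)}|^2\,ds$ and $-\tfrac{1}{2}\sum_{i=1}^d\int D_{xx}M\,|\gamma^i|^2\,ds$.

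It remains to produce the two cross terms $\sum_i\int_0^t D_xK^{(i)}(s,\alpha_s)\delta^{(i)}_s\,ds$ and $-\sum_{i=1}^d\int_0^t D_xN^i(s,\alpha_s)\gamma^i_s\,ds$, and this is the genuinely delicate step. They arise because in the time-increment term the coefficients $N,K^{(i)}$ are evaluated at the frozen argument $\alpha_{t_k}$, whereas their correct evaluation is along the moving point $\alpha_s$; equivalently, when the factor $D_xM$ of the first-order Taylor term is itself expanded as a semimartingale (differentiating the decomposition of $M$ in $x$, legitimate since $M\in C^{0,2}$), its martingale part develops a nonzero joint quadratic variation with the martingale part of $\alpha$. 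Computing these covariations gives $\langle D_xN,\gamma\rangle$ from the backward $B$-integrals (hence the minus sign) and $\langle D_xK^{(i)},\delta^{(i)}\rangle$ from the forward $H^{(i)}$-integrals (hence the plus sign), with $\langle H^{(i)},H^{(j)}\rangle_s=\delta_{ij}s$ again collapsing the double sum to a single one. The main obstacle is precisely the rigorous bookkeeping of these mixed forward--backward covariation terms together with the interchange of the limit $|\pi|\to 0$ with the infinite sum over $i$: this demands uniform $L^2$-estimates on the tails $\sum_{i>N}\int\|\cdot\|^2$, dominated-convergence control of the Taylor remainders through the boundedness of $D_{xx}M$ and the modulus of continuity of the coefficients, and care that the pure-jump contributions of the Teugels martingales reassemble into the stated second-order form rather than an explicit jump sum. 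Once these estimates are secured the remainders vanish in $L^2$, hence $\P$-a.s.\ along a subsequence, and collecting all surviving terms gives the announced equality for every $0\le t\le T$, $\P$-almost surely.
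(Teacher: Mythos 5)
Your proposal is correct and takes essentially the same approach as the paper: the paper in fact gives no proof of this theorem at all, saying only that its proof is ``analogous to the corresponding one in Buckdahn--Ma \cite{BM1} replacing the Brownian motion $W$ by the Teugels martingale $(H^{(i)})_{i\geq 1}$,'' and your partition/Taylor-expansion argument with the backward-$B$/forward-$H^{(i)}$ sign bookkeeping and the orthonormality relation $\langle H^{(i)},H^{(j)}\rangle_s=\delta_{ij}\,s$ is precisely that adaptation. The delicate points you flag explicitly (uniform $L^2$ tail control in $i$, and whether the pure-jump contributions of the Teugels martingales really reassemble into the $\tfrac12\sum_{i}\int_0^t D_{xx}M(s,\alpha_s)|\delta^{(i)}_s|^2\,ds$ term rather than surviving as an explicit jump sum) are left equally unaddressed by the paper, so your sketch is, if anything, more detailed than the paper's own one-line treatment.
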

\subsection{Existence of stochastic viscosity solution}

In this section we prove the existence of the stochastic viscosity
solution to the SPDIEs $(f, g, \phi)$. Our main idea is to apply the
Doss transformation to the GBDSDEL \eqref{GBDSDEmarkovian} to obtain
resulting GBDSDEL without the stochastic integral against $dB$,
which naturally become a GBSDEL with new generators being exactly
$\widetilde{f}$ and $\widetilde{\phi}$. For this, for each
$(t,x)\in[0,T]\times\R, t\leq s\leq T$, let us define the following processes,
\begin{eqnarray}
U^{t,x}_s&=&\varepsilon(s,X_{s}^{t,x},Y_{s}^{t,x}),\nonumber\\
(V^{(1)})^{t,x}_s&=& D_y\varepsilon(s,X_{s}^{t,x},Y_{s}^{t,x})(Z^{(1)})^{t,x}_s+\sigma(X_{s}^{t,x})D_x
\varepsilon(s,X_{s}^{t,x},Y_{s}^{t,x})\nonumber\\
&&+\int_{\R}[\varepsilon(s,X_s^{t,x}+\sigma(X_s^{t,x})u,Y_s^{t,x})
-\varepsilon(s,X_{s}^{t,x},Y_s^{t,x})-D_x\varepsilon(s,X_{s}^{t,x},Y_s^{t,x})\sigma(X_{s}^{t,x})u]p_1(u)\nu(du),\nonumber\\
(V^{(k)})^{t,x}_s&=&
D_y\varepsilon(s,X_{s}^{t,x},Y_{s}^{t,x})(Z^{(k)})^{t,x}_s\label{D1}\\
&&+\int_{\R}[\varepsilon(s,X_s^{t,x}+\sigma(X_s^{t,x})u,Y_s^{t,x})
-\varepsilon(s,X_{s}^{t,x},Y_s^{t,x})-D_x\varepsilon(s,X_{s}^{t,x},Y_s^{t,x})\sigma(X_{s}^{t,x})u]p_k(u)\nu(du),\nonumber\\
&&k\in\{2,\cdot\cdot\cdot\}.\nonumber
\end{eqnarray}
From Proposition 3.4 appeared in \cite{BM1}, the process
$\{(U^{t,x}_s,V^{t,x}_s),\;\; s\in[t,T]\}$ belongs to $\mathcal{E}$
for each $(t,x)\in[0,T]\times\overline{\Theta}$.

Now we are ready to give the following result.
\begin{theorem}
For each $(t,x)\in[0,T]\times\overline{\Theta}$, the pair
$(U^{t,x},V^{t,x})$ is the unique solution of the following GBSDEL:
\begin{eqnarray}
U_{s}^{t,x}&=&u_0(X^{t,x}_{T})+\int_{t}^{T}\widetilde{f}(r,X^{t,x}_{r},U^{t,x}_r,V^{t,x}_r)dr
+\int_{s}^{T}\widetilde{\phi}(r,X^{t,x}_r,U^{t,x}_r)dA_r^{t,x}
\nonumber\\&&-\sum_{k=1}^{\infty}\int_{s}^{T}(V^{t,x}_{r})^{(k)}dH^{(k)}_{r},\;\; t\leq s\leq T,\nonumber\\
\label{a11}
\end{eqnarray}
where $\widetilde{f}$ and $\widetilde{\phi}$ are given by \eqref{Doss1} and \eqref{Doss2} respectively.
\end{theorem}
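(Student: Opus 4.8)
The plan is to produce \eqref{a11} by differentiating the transformed process $U^{t,x}_s=\varepsilon(s,X^{t,x}_s,Y^{t,x}_s)$ with the generalized It\^o--Ventzell formula. Since the random field $\varepsilon$ carries two spatial parameters $(x,y)$, I would first invoke the two-parameter version of Theorem~\ref{theorem3.4} (whose proof follows the same lines, the extra spatial variable only enlarging the list of first- and second-order terms). The two inputs are: the field $\varepsilon(\cdot,x,y)$, which for frozen $(x,y)$ is a semimartingale in time whose only martingale component is the Stratonovich Brownian term dictated by the inverse-flow SPDE, with no $dr$-drift and no $dH^{(i)}$-part; and the spatial arguments $X^{t,x}$ and $Y^{t,x}$, read off from \eqref{RSDEJ1} and \eqref{GBDSDEmarkovian}. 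Note that $X^{t,x}$ is driven only by $dL$ (equivalently, after passing to the Teugels martingales, by $dr$ and the $dH^{(i)}$) together with the reflection term $\nabla\psi\,dA$, and carries no $B$-noise, whereas $Y^{t,x}$ is driven by $dr$, $dA$, the $g\,dB$ term and the $dH^{(i)}$. Before differentiating I would dispose of the terminal value: by \eqref{p1} the map $\eta(T,\cdot,\cdot)$ is the identity in $y$, hence so is $\varepsilon(T,\cdot,\cdot)$, and therefore $U^{t,x}_T=\varepsilon(T,X^{t,x}_T,u_0(X^{t,x}_T))=u_0(X^{t,x}_T)$, which matches the terminal datum in \eqref{a11}.

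The decisive point is the disappearance of every $dB$-term from $dU^{t,x}$, which is exactly the Doss--Sussmann mechanism. In the It\^o--Ventzell expansion the Brownian martingale part of $dU^{t,x}_s$ is fed from two sources: the intrinsic Stratonovich term $D_y\varepsilon\,g\,\circ dB_s$ of the inverse flow, and $D_y\varepsilon$ applied to the $g(s,X_s,Y_s)\,dB_s$ component of $dY^{t,x}_s$; by the defining identity $\eta(s,X^{t,x}_s,U^{t,x}_s)=Y^{t,x}_s$ of the flow these two contributions are equal and opposite, so $U^{t,x}$ is free of $B$-noise, as required for \eqref{a11}. Converting the Stratonovich Brownian integral of the field into It\^o form is what produces the correction $-\tfrac12\,gD_yg$ appearing in $\widetilde f$ through \eqref{Doss1}, and the accompanying second-order Brownian cross-terms cancel alongside the martingale part. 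I would then collect the surviving $dr$-, $dA$- and $dH^{(k)}$-coefficients and identify them, respectively, with $\widetilde f$, $\widetilde\phi$ and $(V^{(k)})^{t,x}$. Here the derivative identities between $\varepsilon$ and $\eta$ recorded in Remark 2.5 are used to rewrite the $\varepsilon$-coefficients (of the form $L_x\varepsilon$, $D_{xy}\varepsilon$, $D_{yy}\varepsilon$, $D_y\varepsilon\,f$) as the $\eta$-expressions figuring in \eqref{Doss1} and \eqref{Doss2}; this is the same linear-algebraic step already carried out at the PDE level in the proof of Proposition 2.4.

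The step I expect to be most delicate is the bookkeeping of the jump terms, which is where the integrals against $p_k(u)\nu(du)$ in \eqref{D1} and in \eqref{Doss1} are generated. At a jump of the L\'evy process of size $u$, the forward diffusion jumps by $\Delta X=\sigma(X_{s^-})u$ while $Y$ jumps through the Teugels martingales, with $\Delta H^{(k)}=p_k(u)$. One must therefore expand $\varepsilon(s,X_{s^-}+\sigma(X_{s^-})u,\,Y_{s^-}+\Delta Y)-\varepsilon(s,X_{s^-},Y_{s^-})$, split it into the part linear in the $X$-jump and the part carried by the $Y$-jump, compensate, and project onto the orthonormal Teugels basis. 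Compensation of the $X$-jump contribution yields precisely the $p_k$-weighted correction $\int_{\R}[\varepsilon(s,X+\sigma u,Y)-\varepsilon(s,X,Y)-D_x\varepsilon\,\sigma u]p_k(u)\nu(du)$ of \eqref{D1}, while the $Y$-jump contribution supplies the term $D_y\varepsilon\,(Z^{(k)})^{t,x}$; the leftover integrals feed the last argument of $f$ and the jump integrals displayed in \eqref{Doss1}. Throughout, the process $\theta^k$ of \eqref{Def} is the bridge between the $\varepsilon$-increments written above and the $\eta$-increments appearing in $\widetilde f$.

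Finally, for uniqueness I would note that $(U^{t,x},V^{t,x})\in\mathcal{E}^2$ by the integrability already granted (Proposition 3.4 of \cite{BM1}), and that the transformed coefficients $\widetilde f$ and $\widetilde\phi$ inherit from $({\bf A1})$--$({\bf A5})$ and from the $C^{0,2,2}$-regularity and strict positivity of $D_y\eta$ the Lipschitz and linear-growth bounds needed for well-posedness. Hence \eqref{a11} is a generalized BSDE driven by the Teugels martingale with no Brownian term, and its solution is unique by the standard existence-and-uniqueness theory for such equations (the $g\equiv 0$ specialization of Theorem 3.3, cf.\ \cite{OR,HY1}); consequently $(U^{t,x},V^{t,x})$ is its unique solution.
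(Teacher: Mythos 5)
Your existence argument is essentially the paper's own proof: apply the generalized It\^o--Ventzell formula (Theorem~\ref{theorem3.4}, extended to the two spatial parameters of $\varepsilon$) to $\varepsilon(s,X^{t,x}_s,Y^{t,x}_s)$, observe that the Doss--Sussmann mechanism cancels all $dB$-terms, handle the jump/Teugels bookkeeping, and use the derivative identities of Remark 2.5 together with \eqref{Def} to identify the surviving $dr$-, $dA$- and $dH^{(k)}$-coefficients with $\widetilde f$, $\widetilde\phi$ and $(V^{(k)})^{t,x}$; this is exactly the computation \eqref{ItV1}--\eqref{Phi0} in the paper, and your treatment of the terminal value is also fine.

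The genuine gap is in your uniqueness paragraph. You claim that $\widetilde f$ and $\widetilde\phi$ ``inherit'' Lipschitz and linear-growth bounds from $({\bf A1})$--$({\bf A5})$, so that the standard well-posedness theory for GBSDELs (the $g\equiv 0$ case of Theorem 3.3, or \cite{HY1,OR}) applies directly to \eqref{a11}. This is false: the generator \eqref{Doss1} contains the term
\begin{equation*}
\frac{1}{2}\,D_{yy}\eta(t,x,y)\sum_{k=1}^{\infty}\Bigl|z^{(k)}+\int_{\R}\theta^k(t,x,y,u)\,\nu(du)\Bigr|^2 ,
\end{equation*}
which is \emph{quadratic} in $z\in\ell^2$ (with a random coefficient), so $\widetilde f$ is not Lipschitz in $z$ and the cited Lipschitz theory cannot be invoked for the transformed equation. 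This is precisely why the paper does not argue as you do: uniqueness is instead obtained by pulling back through the transformation. Since \eqref{D1}--\eqref{D2} define a bijection $(X,Y,Z)\leftrightarrow(X,U,V)$, any solution $(U',V')$ of \eqref{a11} is mapped by \eqref{D2} (and the It\^o--Ventzell formula applied to $\eta(s,X_s,U'_s)$) to a solution of the GBDSDEL \eqref{GBDSDEmarkovian}; the latter has a unique solution by Theorem 3.3, whence $(U',V')=(U^{t,x},V^{t,x})$. You should replace your last paragraph by this inversion argument; as written, the uniqueness claim of the theorem is not established.
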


\begin{proof}
For clarity, $(X^{t,x}, Y^{t,x}, Z^{t,x}, U^{t,x}, V^{t,x})$ will be
replaced by $(X, Y, Z, U, V)$ throughout this proof. As it is shown
in \cite{BM1}, the mapping $(X, Y, Z)\mapsto (X, U, V)$ is
one-to-one, with the inverse transformation:
\begin{eqnarray}
Y_s&=&\eta(s,X_s,U_s),\nonumber\\
Z_s^{(1)}&=&D_y\eta(s,X_s,U_s)V^{(1)}_s+\sigma(X_s)D_x\eta(s,X_s,U_s)\nonumber\\
&&+\int_{\R}[\eta(s,X_s+\sigma(X_s)u,U_s)
-\eta(s,X_{s},U_s)-D_x\eta(s,X_{s},U_s)\sigma(X_{s})u]p_1(u)\nu(du),\nonumber\\
Z_s^{(k)}&=&D_y\eta(s,X_s,U_s)V^{(k)}_s\label{D2}\\
&&+\int_{\R}[\eta(s,X_s+\sigma(X_s)u,U_s)
-\eta(s,X_{s},U_s)-D_x\eta(s,X_{s},U_s)\sigma(X_{s})u]p_k(u)\nu(du),  k\in\{2,\cdot\cdot\cdot\}.\nonumber
\end{eqnarray}
Thanks to \eqref{D1} and \eqref{D2}, the uniqueness of GBSDEL
\eqref{a11} follows from GBDSDEL \eqref{GBDSDEmarkovian}. Thus, the
proof reduces to show that $(U, V)$ is a solution of the GBSDEL
\eqref{a11}. To this end, let us remark that $U_T = Y_T = u_0(X_T)$.
Moreover, applying the generalized Itô-Ventzell formula (see Theorem
4.2) to $\varepsilon(s, X_s,Y_s)$, and after a little calculation we
obtain
\begin{eqnarray}
U_t&=&u_0(X_T)+\int_t^TD_y\varepsilon(s,X_s,Y_s)f(s,X_s,Y_s,Z_s)ds
+\int_t^TD_y\varepsilon(s,X_s,Y_s)\phi(s,X_s,Y_s)dA_s\nonumber\\
&&-\sum_{k=1}^{\infty}\int_t^TD_y\varepsilon(s,X_s,Y_s)Z^{(k)}_sdH^{(k)}_s
-m_1\int_t^TD_x\varepsilon(s,X_s,Y_s)\sigma(X_s)ds
\nonumber\\
&&-\int_t^TD_x\varepsilon(s,X_s,Y_s)\sigma(X_s)dH^{(1)}_s\nonumber\\
&&-\int_t^TD_x\varepsilon(s,X_s,Y_s)\nabla\psi(X_s)dA_s
-\frac{1}{2}\int_t^T\sigma(X_s)^{*}D_{xx}\varepsilon(s,X_s,Y_s)\sigma(X_s)ds
\nonumber\\
&&-\sum_{k=1}^{\infty}\int_t^T\int_{\R}[\varepsilon(s,X_s+\sigma(X_s)u,Y_s)
-\varepsilon(s,X_{s},Y_s)-D_x\varepsilon(s,X_s,Y_s)\sigma(X_{s})u]p_k(u)\nu(du)dH^{(k)}_s\nonumber\\
&&+\int_t^T\int_{\R}[\varepsilon(s,X_s+\sigma(X_s)u,Y_s)-\varepsilon(s,X_{s},Y_s)
-D_x\varepsilon(s,X_s,Y_s)\sigma(X_{s})u]\nu(du)ds\nonumber\\
&&+\frac{1}{2}\sum_{k=1}^{\infty}\int_t^TD_{yy}\varepsilon(s,X_s,Y_s)|Z^{(k)}_s|^2ds
-\int_t^T\sigma^*(X_s)D_{xy}\varepsilon(s,X_s,Y_s)Z^{(1)}_sds\nonumber\\
&&-\frac{1}{2}\int_t^TD_y\varepsilon(s,X_s,Y_s)\langle g,D_yg\rangle(s,X_s,Y_s)ds.\label{ItV1}\\
U_t&=&u_0(X_T)+\int_t^TF(s,X_s,Y_s,Z_s)ds+
\int_t^T\Phi(s,X_s,Y_s)dA_s-\sum_{k=1}^{\infty}\int_t^TV^{(k)}dH_s^{(k)},\nonumber
\end{eqnarray}
where
\begin{eqnarray}
F(s,x,y,z)&=&D_y\varepsilon f(s,x,y,z)-m_1D_x\varepsilon\sigma(x)
+\frac{1}{2}D_{yy}\varepsilon\|z\|^2-\sigma^*(x)D_{xy}\varepsilon
z^{(1)}
\nonumber\\
&&-\frac{1}{2}\sigma^*(x)D_{xx}\varepsilon\sigma(x)-\frac{1}{2}D_y\varepsilon\langle
g,D_yg\rangle(s,x,y)\nonumber\\
&&
+\int_{\R}[\varepsilon(s,x+\sigma(x)u,y)-\varepsilon(s,x,y)-D_x\varepsilon\sigma(x)u]\nu(du)\label{F}
\end{eqnarray}
and
\begin{eqnarray}
\Phi(s,x,y)=D_y\varepsilon\phi(s,x_s,y,z)-D_{x}\varepsilon\nabla\psi(x)\label{Phi},
\end{eqnarray}
replaced $\varepsilon(s,x,y)$ by $\varepsilon$. Comparing
\eqref{ItV1} with \eqref{a11}, it suffices to show that
\begin{eqnarray}
F(s,X_s,Y_s,Z_s)=\widetilde{f}(s,X_s,U_s,V_s),\;\;\; \forall\,
s\in[0,T],\; \P\mbox{-a.s.}\label{F0}
\end{eqnarray}
and
\begin{eqnarray}
\Phi(s,X_s,Y_s)=\widetilde{\phi}(s,X_s,U_s),\;\; \forall\, s\in[0,T],\; \P\mbox{-a.s}.\label{Phi0}
\end{eqnarray}
To this end, if we write $\sigma(X_s) =\sigma_s$ and recall \eqref{Def} together with Remark 2.5 we obtain the following equalities:
\begin{eqnarray}
D_x\varepsilon(s,X_s,Y_s)\sigma(X_s)&=&-D_y\varepsilon(s,X_s,Y_s)\sigma(X_s)D_x\eta(s,X_s,U_s)\label{CF1}
\end{eqnarray}
\begin{eqnarray}
(D_y\varepsilon) f\left(s,X_s,Y_s,(Z^{(k)}_s)^{\infty}_{k=0}\right)&=& (D_y\varepsilon)
 f\Big(s,X_s,\eta(s,X_s,U_s),\big(D_y\eta V^{(k)}_s+\sigma^*_s(D_x\eta){\bf 1}_{\{k=1\}}\nonumber\\
&&+\int_{\R}\theta^{k}(s,X_s,U_s,u)\nu(du)\Big)^{\infty}_{k=1}\Big)\nonumber\\
\sigma^*_s(D_{xy}\varepsilon) Z_s^{(1)}&=&\sigma^*_s(D_{xy}\varepsilon) D_y\eta(s,X_s,U_s)V^{(1)}+\sigma^*_s(D_{xy}\varepsilon)\sigma^*_sD_x\eta\nonumber\\
&&+\sigma^*_s(D_{xy}\varepsilon)\int_{\R}\theta^{1}(s,X_s,U_s,u)\nu(du)
\label{CF2bis}
\end{eqnarray}
\begin{eqnarray}
-\frac{1}{2}(D_{yy}\varepsilon)\sum^{\infty}_{k=1}|Z^{(k)}_s|^2&=&\frac{1}{2}(D_y\varepsilon)(D_{yy}\eta)\sum^{\infty}_{k=1}|V^{(k)}_s|^2
+(D_{y}\varepsilon)^2(D_{yy}\eta)V^{(1)}_s\sigma_s(D_x\eta)\nonumber\\
&&+\frac{1}{2}(D_y\varepsilon)(D_{yy}\eta)|\sigma_s(D_x\eta)(D_y\varepsilon)|^2+(D_{y}\varepsilon)^2D_{yy}\eta \sum_{k=1}^{\infty}V^{(k)}_s\int_{\R}\theta^{k}(s,X_s,U_s,u)\nu(du)\nonumber\\
&&+\frac{1}{2}(D_y\varepsilon)(D_{yy}\eta)\sum_{k=1}^{\infty}\left|D_y\varepsilon\int_{\R}\theta^{k}(s,X_s,U_s,u)\nu(du)\right|^2\nonumber\\
&&+(D_y\varepsilon)(D_{yy}\eta)(D_y\varepsilon)^2\sigma_sD_x\eta\int_{\R}\theta^{1}(s,X_s,U_s,u)\nu(du).\nonumber\\
\label{CF3bis}
\end{eqnarray}
Hence plugging \eqref{CF1}-\eqref{CF3bis} in \eqref{F}, we get
\begin{eqnarray}
F(s,X_s,Y_s,Z_s)&=&D_y\varepsilon\Bigg[f\left(s,X_s,\eta,\left(D_y\eta
V^{(k)}_s+\sigma^*_s(D_x\eta){\bf 1}_{\{k=1\}}+\int_{\R}\theta^k(s,X_s,U_s,u)\nu(du)\right)^{\infty}_{k=0}\right)\nonumber\\
&&+m_1\sigma_sD_x\eta+\frac{1}{2}(D_{yy}\eta)\sum^{\infty}_{k=1}\left|V^{(k)}_s+D_y\varepsilon\int_{\R}\theta^{k}(s,X_s,U_s,u)\nu(du)\right|^2
-\frac{1}{2}\langle g,D_yg\rangle(s,X_s,\eta)\Bigg]\nonumber\\
&&+V^{(1)}\sigma^*_s\Big[(D_x\eta)(D_{y}\varepsilon)^2(D_{yy}\eta)-D_y\eta(D_{xy}\varepsilon)\Big]\nonumber\\
&&+\left(\int_{\R}\theta^{1}(s,X_s,U_s,u)\nu(du)\right)\sigma^*_s\Big[(D_x\eta)(D_{y}\varepsilon)^2(D_{yy}\eta)-D_y\eta(D_{xy}\varepsilon)\Big]\nonumber\\
&&\Bigg[\frac{1}{2}(D_{yy}\eta)(D_y\varepsilon|\sigma_s(D_x\eta)(D_y\varepsilon)|^2
-\sigma^*_s(D_{xy}\varepsilon)\sigma^*_sD_x\eta\label{CF2}\\
&&-\frac{1}{2}\sigma^*(x)D_{xx}\varepsilon\sigma_s
+\int_{\R}[\varepsilon(s,X_s+\sigma_su,Y_s)-\varepsilon(s,X_{s},Y_s)-(D_x\varepsilon)\sigma_{s}u]\nu(du)\Bigg],
\nonumber
\end{eqnarray}
where all the derivatives of the random field
$\varepsilon(\cdot,\cdot , \cdot)$ are to be evaluated at the point
$(s, x,\eta(s, x, y))$, and all those of $\eta(\cdot,\cdot , \cdot)$
at $(s, x, y)$. On other hand, using again Remark 2.5, we have
\begin{eqnarray}
-\frac{1}{2}\sigma^*(x)(D_{xx}\varepsilon)\sigma_s&=&(\sigma_s)^2D_{xy}\varepsilon D_x\eta-\frac{1}{2}(D_y\varepsilon)D_{yy}\eta|\sigma_sD_x\eta D_y\varepsilon|^2\nonumber\\
&&+\frac{1}{2}(D_y\varepsilon)(\sigma_s)^2(D_{xx}\eta)\label{sigma1}
\end{eqnarray}
and
\begin{eqnarray}
D_x\eta(D_y\varepsilon)^2(D_{yy}\eta)-D_{xy}\varepsilon
D_y\eta=D_y\varepsilon
D_{xy}\eta.\label{sigma2}
\end{eqnarray}
The equalities in \eqref{sigma1} and \eqref{sigma2}), together with
$D_y\varepsilon(s,X_s,Y_s )=(D_y\eta)^{-1}(s,X_s,U_s )$, imply that
\begin{eqnarray*}
F(s,X_s,Y_s,Z_s)&=&D_y\varepsilon\Big[f\left(s,X_s,\eta,\left(D_y\eta
V^{(k)}_s+\sigma^*_s(D_x\eta){\bf 1}_{\{k=1\}}+\int_{\R}\theta^k(s,X_s,U_s,u)\nu(du)\right)^{\infty}_{k=0}\right)\nonumber\\
&&+m_1\sigma_sD_x\eta+\frac{1}{2}(D_{yy}\eta)\sum^{\infty}_{k=1}\left|V^{(k)}_s+D_y\varepsilon\int_{\R}\theta^{k}(s,X_s,U_s,u)\nu(du)\right|^2
-\frac{1}{2}\langle g,D_yg\rangle(s,X_s,\eta)\Big]\nonumber\\
&&+\frac{1}{2}(D_y\varepsilon)\sigma_s^2(D_{xx}\eta)
+(D_y\varepsilon)\sigma_sD_{xy}\eta\left(V_s^{(1)}+\int_{\R}\theta^{1}(s,X_s,U_s,u)\nu(du)\right)\\
&&+\int_{\R}[\varepsilon(s,X_s+\sigma_su,Y_s)
-\varepsilon(s,X_{s},Y_s)-(D_x\varepsilon)\sigma_{s}u]\nu(du).
\nonumber
\end{eqnarray*}
Next, using again Remark 2.5 together with changing variable
($t=-u$), we have
\begin{eqnarray*}
&&\int_{\R}[\varepsilon(s,X_s+\sigma_su,Y_s)-\varepsilon(s,X_{s},Y_s)-(D_x\varepsilon)\sigma_{s}u]\nu(du)\\
&=&-D_y\varepsilon\int_{\R}[\eta(s,X_s+\sigma_su,U_s)-\eta(s,X_{s},U_s)-(D_x\eta)\sigma_{s}u]\nu(du)\\
&=&D_y\varepsilon\int_{\R}[\eta(s,X_s+\sigma_su,U_s)-\eta(s,X_{s},U_s)-(D_x\eta)\sigma_{s}u]\nu(du).
\end{eqnarray*}
Finally, we obtain
\begin{eqnarray}
F(s,X_s,Y_s,Z_s)&=&D_y\varepsilon\Big[f\left(s,X_s,\eta,\left(D_y\eta
V^{(k)}_s+\sigma^*_s(D_x\eta){\bf 1}_{\{k=1\}}+\int_{\R}\theta^k(s,X_s,U_s,u)\nu(du)\right)^{\infty}_{k=0}\right)\nonumber\\
&&+m_1\sigma_sD_x\eta+\frac{1}{2}(D_{yy}\eta)\sum^{\infty}_{k=1}\left|V^{(k)}_s+\int_{\R}\theta^{k}(s,X_s,U_s,u)\nu(du)\right|^2
-\frac{1}{2}\langle g,D_yg\rangle(s,X_s,\eta)\Big]\nonumber\\
&&+\frac{1}{2}(D_y\varepsilon)\sigma_s^2(D_{xx}\eta)
+(D_y\varepsilon)\sigma_sD_{xy}\eta\left(V_s^{(1)}+\int_{\R}\theta^{1}(s,X_s,U_s,u)\nu(du)\right)\label{CF3}\\
&&+D_y\varepsilon\int_{\R}[\eta(s,X_s+\sigma_su,U_s)-\eta(s,X_{s},U_s)-(D_x\eta)\sigma_{s}u]\nu(du).\nonumber
\end{eqnarray}
Since the expressions in \eqref{CF2} and \eqref{CF3} are equal, this
shows the equality in \eqref{F0}.
\\
Next, we show the
equality in \eqref{Phi0}. In fact,
\begin{eqnarray}
\Phi(s,X_s,Y_s)&=&D_y\varepsilon\phi(s,X_s,Y_s)-D_{x}\varepsilon\nabla\psi(X_s)\nonumber\\
&=&D_y\varepsilon(s,X_s,Y_s)[D_x\eta(s,X_s,U_s)\nabla\psi(X_s)+\phi(s,X_s,\eta(s,X_s,U_s)]\nonumber\\
&=&\frac{1}{D_y\eta(s,X_s,U_s)}[D_x\eta(s,X_s,U_s)\nabla\psi(X_s)+\phi(s,X_s,\eta(s,X_s,U_s)]\nonumber\\
&=&\widetilde{\phi}(s,X_s,U_s). \label{Phi}
\end{eqnarray}
This ends the proof of theorem.
\end{proof}
We are now ready to prove the existence of the stochastic viscosity
solutions of SPDIE $(f,g,\phi)$. Let us define for each $(t,
x)\in[0; T]\times\overline{\Theta}$ two random fields
\begin{eqnarray}
u(t, x) = Y_t^{t,x}, \ v(t, x) = U_t^{t,x}.\label{Visco}
\end{eqnarray}
\begin{theorem}\label{Th:exis-uniq}
Assume that $({\bf A1})$--$({\bf A5})$ hold. Then, the random field
$v$ is a stochastic viscosity solution of SPDIE $(\widetilde{f},
0,\widetilde{\phi})$ and hence $u$ is a stochastic viscosity
solution to SPDIE $(f, g, \phi)$.
\end{theorem}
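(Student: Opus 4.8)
The plan is to reduce the statement to the deterministic theory by combining the two bridges already built in the paper: the Doss--Sussmann transformation of Proposition 2.4 and the probabilistic representation of Theorem 4.4. The key observation is that the pair $(U^{t,x},V^{t,x})$ defined in \eqref{D1} solves the transformed GBSDEL \eqref{a11} with generators $\widetilde f$ and $\widetilde\phi$, and this GBSDEL is \emph{free of any $dB$ integral}. Consequently, for $\P$-almost every fixed $\omega\in\Omega$, equation \eqref{a11} is a genuinely deterministic (in the $B$-randomness) generalized BSDE driven only by the Teugels martingale $(H^{(i)})_{i\ge1}$ and the boundary term $dA$. First I would fix such an $\omega$ and invoke the deterministic nonlinear Feynman--Kac result of Ren and El Otmani \cite{OR}: the map $v(t,x)=U^{t,x}_t$ is, for that frozen $\omega$, the unique deterministic viscosity solution of the PDIE $(\widetilde f(\omega,\cdot),0,\widetilde\phi(\omega,\cdot))$ with terminal datum $u_0$ and nonlinear Neumann condition. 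In the notation of Definition~2.3, this exactly says that $v$ is an $\omega$-wise viscosity solution of $(\widetilde f,0,\widetilde\phi)$.

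Second, I would upgrade this $\omega$-wise property to the genuine notion of stochastic viscosity solution in Definition~2.1. For $g\equiv0$ the flow $\eta$ is the identity and $\Psi=\varphi$, so the stochastic inequalities in Definition~2.1 collapse precisely to the $\omega$-wise deterministic viscosity inequalities tested against $\mathcal F^B_\tau$-measurable random fields $\varphi$, frozen at the stopping time $\tau$ and state $\xi$. The only subtlety is the passage from testing against deterministic test functions (for each fixed $\omega$) to testing against the $\mathcal F^B_\tau$-measurable random test fields required in Definition~2.1. This is handled exactly as in Buckdahn--Ma \cite{BM1,BM2}: one approximates $\tau$ by discretely-valued stopping times and uses the regular conditional probability given $\mathcal F^B_\tau$, on each atom of which $\varphi$ becomes deterministic and the $\omega$-wise viscosity property applies directly; a continuity/limiting argument in $\tau$ then yields the inequalities of Definition~2.1(a)--(b). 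I would therefore conclude that $v$ is a stochastic viscosity solution of SPDIE $(\widetilde f,0,\widetilde\phi)$.

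Finally, having established the statement for $v$, the assertion for $u$ follows immediately by applying Proposition~2.4 in the reverse direction: since $u(t,x)=\eta(t,x,v(t,x))$, i.e. $v=\varepsilon(\cdot,\cdot,u)$, and since Proposition~2.4 gives the equivalence ``$u$ is a stochastic viscosity sub-/super-solution of $(f,g,\phi)$ $\iff$ $v=\varepsilon(\cdot,\cdot,u)$ is one of $(\widetilde f,0,\widetilde\phi)$,'' the stochastic viscosity solution property transfers directly from $v$ to $u$. Note that $u(t,x)=Y^{t,x}_t=\eta(t,x,U^{t,x}_t)=\eta(t,x,v(t,x))$ by the inverse transformation \eqref{D2}, so the two random fields in \eqref{Visco} are indeed related by the diffeomorphism $\eta$, which is what Proposition~2.4 requires.

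The main obstacle I expect is the measurability/limiting step in the second paragraph: rigorously transferring the deterministic (per-$\omega$) viscosity inequalities to the stochastic inequalities tested against random $\mathcal F^B_\tau$-measurable fields $\varphi$ and general stopping times $\tau$. Freezing $\omega$ gives the PDIE structure for free, but one must verify that the $\P$-null exceptional set from \cite{OR} can be chosen uniformly (i.e. that the deterministic viscosity property holds simultaneously for a.e.\ $\omega$ and every test configuration), and that the conditioning argument over $\mathcal F^B_\tau$ is compatible with the Neumann boundary term on the event $\{\xi\in\partial\Theta\}$. Everything else—the algebraic identities relating $F,\Phi$ to $\widetilde f,\widetilde\phi$, and the equivalence of sub/supersolution notions—has already been supplied by Theorem~4.4 and Proposition~2.4.
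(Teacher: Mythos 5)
Your proposal follows the paper's own route exactly: freeze $\omega$, use the $dB$-free GBSDEL \eqref{a11} together with the deterministic result of Ren--El Otmani \cite{OR} to conclude that $v$ is an $\omega$-wise viscosity solution of $(\widetilde f,0,\widetilde\phi)$, upgrade this to the stochastic notion, and transfer to $u$ via the Doss--Sussmann equivalence of Proposition 2.4. The only deviation is that your second step is over-engineered: since the needed direction is the easy one --- a random test field $\varphi$ frozen at $\omega$ is simply a deterministic $C^{1,2}$ test function touching $v(\omega,\cdot,\cdot)$ at $(\tau(\omega),\xi(\omega))$ --- the upgrade is automatic (as the paper asserts when invoking Definition 2.3), and no approximation by discretely-valued stopping times or regular conditional probabilities is required.
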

\begin{proof}
Let us define $u(t,x)=Y^{t,x}_t$ and $v(t,x)=U^{t,x}_t$, where $Y$
and $U$ are given as above. We have
\begin{eqnarray}
u(t,\omega,x)=\eta(t,\omega,v(t,\omega,x))\;\; \mbox{and}\;\; v(t,\omega,x)=\varepsilon(t,\omega,v(t,\omega,x)).
\label{TD}
 \end{eqnarray}
Since $Y^{x,t}_s$ is
$\mathcal{F}_{t,s}^{L}\otimes\mathcal{F}_{s,T}^{B}$-measurable, it
follows that $Y^{x,t}_t$ is $\mathcal{F}_{t,T}^{B}$-measurable.
Therefore, $u(t,x)$ is $\mathcal{F}_{t,T}^{B}$- measurable and so it
is independent of $\omega'\in\Omega'$. Consequently, according to
proposition 1.7 in \cite{HY1}, we have $u\in C({\bf
F}^{B},[0,T]\times\overline{\Theta})$. Moreover, \eqref{TD} implies
that $v$ belongs to $C({\bf F}^{B},[0,T]\times\overline{\Theta})$.
We emphasis that as an ${\bf F}^B$-progressively measurable
$\omega$-wise viscosity solution is automatically a stochastic
viscosity solution (see Definition 2.3),
 it suffice to show that $v$ is an $\omega$-wise viscosity solution to SPDIE$(\widetilde{f},0,\widetilde{\phi})$.
 To do it, let us denote, for a fixed $\omega\in\Omega$,
\begin{eqnarray*}
\overline{U}^{\omega}(\omega')=U(\omega,\omega'),\;\;\overline{V}^{\omega}(\omega')=V(\omega,\omega').
\end{eqnarray*}
Then, $(\overline{U}^{\omega},\overline{V}^{\omega})$ is the unique
solution of the GBDSDELs with coefficient
$(\widetilde{f}(\omega,\cdot,\cdot,\cdot),\;\,\widetilde{\phi}(\omega,\cdot,\cdot))$,
and as it is shown by Ren and Otmani in \cite{OR}, $\bar{v}(\omega,
t, x)=\overline{U}^{\omega}_t$ is a viscosity solution to
SPDIE$(\widetilde{f}(\omega,\cdot,\cdot,\cdot),\widetilde{\phi}(\omega,\cdot,\cdot))$
with nonlinear Neumann boundary condition. By Blumenthal's $0$-$1$
law it folows that
$\displaystyle{\P'(\overline{U}_t^{\omega}(\omega')=U_t(\omega,\omega'))=
1}$. Hence we get $\bar{v}(t, x) = v(t, x)\;\P$-almost surely for
all $(t, x)\in[0,T]\Theta$. Therefore, for every $\omega$ fixed the
function $v\in C({\bf F}^B, [0,T]\times\Theta)$ is a viscosity
solution to the SPDE
$(\widetilde{f}(\omega,\cdot,\cdot,\cdot),\widetilde{\phi}(\omega,\cdot,\cdot))$.
Hence, by definition it is an $\omega$-wise viscosity solution and
hence a stochastic viscosity solution to
SPDIE$(\widetilde{f},0,\widetilde{\phi})$. The conclusion of the
theorem now follows from Theorem 3.5.
\end{proof}

\label{lastpage-01}
\end{document}